\theoremstyle{plain}
\newtheorem{thm}{Theorem}[section]
\newtheorem{prop}[thm]{Proposition}
\newtheorem{lemma}[thm]{Lemma}
\newtheorem{cor}[thm]{Corollary}
\theoremstyle{definition}
\newtheorem{defn}[thm]{Definition}
\theoremstyle{remark}
\newtheorem{rem}[thm]{Remark}
\numberwithin{equation}{section}
\renewcommand\labelenumi{(\alph{enumi})}
\renewcommand\theenumi\labelenumi
\newcommand{\R}{\mathbb{R}} 
\newcommand{\N}{\mathbb{N}}
\newcommand{\Grad}{\nabla}  
\newcommand{\Div}{{\rm div}\,}
\newcommand{\dx}{\,{\rm d}x}
\newcommand{\dnu}{\,{\rm d}\nu}
\newcommand{\dd}{\,{\rm d}}
\newcommand{\dlambda}{\,{\rm d}\lambda}
\newcommand{\dlambdahat}{\,{\rm d}\widehat{\lambda}}
\newcommand{\dt}{\,{\rm d}t}
\newcommand{\closure}[1]{\ov{#1}}
\newcommand{\meas}{\mathcal{M}}
\newcommand{\probmeas}{\mathcal{P}}
\renewcommand{\rho}{\varrho}
\newcommand{\sphere}[1]{\mathcal{S}^{#1}}
\newcommand{\T}{\mathbb{T}} 
\newcommand{\half}{\tfrac{1}{2}}
\newcommand{\Cc}{C_{\rm c}^\infty} 
\newcommand{\Cb}{C_{\rm b}} 
\newcommand{\weak}{{\rm weak}}
\newcommand{\ov}[1]{\overline{#1}}
\renewcommand{\subset}{\subseteq}
\renewcommand{\emptyset}{\varnothing}
\newcommand{\name}[1]{\textsc{#1}} 
\newcommand{\Var}{{\rm Var}}
\newcommand{\V}{\mathcal{V}}
\newcommand{\setGYM}{\mathbf{Y}}
\begin{document} 
	
	\title{Maximal turbulence as a selection criterion for measure-valued solutions} 
	
	\author{Christian Klingenberg\footnote{e-mail: christian.klingenberg@uni-wuerzburg.de} \and Simon Markfelder\footnote{Corresponding author; e-mail: simon.markfelder@uni-konstanz.de} \and Emil Wiedemann\footnote{e-mail: emil.wiedemann@fau.de}} 
	
	\date{\today}
	
	\maketitle
	
	\bigskip
	
	\centerline{$^{\ast}$ Julius-Maximilians-Universit\"at W\"urzburg, Institute of Mathematics,} 
	
	\centerline{Emil-Fischer-Str. 40, 97074 W\"urzburg, Germany} 
	
	\bigskip
	
	\centerline{$^{\dagger}$ Universit\"at Konstanz, Department of Mathematics and Statistics,} 
	
	\centerline{Post office box: 199, 78457 Konstanz, Germany} 
	
	\bigskip
	
	\centerline{$^{\ddagger}$ Friedrich-Alexander-Universit\"at Erlangen-N\"urnberg, Department of Mathematics,} 
	
	\centerline{Cauerstraße 11, 91058 Erlangen, Germany} 
	
	\bigskip
	
	\begin{abstract} 
		The quest for a good solution concept for the partial differential equations (PDEs) arising in mathematical fluid dynamics is an outstanding open problem. An important notion of solutions are the measure-valued solutions. It is well known that for many PDEs there exists a multitude of measure-valued solutions even if admissibility criteria like an energy inequality are imposed. Hence in recent years, people have tried to select the relevant solutions among all admissible measure-valued solutions or at least to rule out some solutions which are not relevant. 
		
		In this paper another such criterion is studied. In particular, we aim to select generalized Young measures which are ``maximally turbulent''. To this end, we look for maximizers of a certain functional, namely the variance, or more precisely, the Jensen defect of the energy. We prove existence of such a maximizer and we show that its mean value and total energy is uniquely determined. Our theory is carried out in a very general setting which may be applied in many situations where maximally turbulent measures shall be selected among a set of generalized Young measures. 
		
		Finally, we apply this general framework to the incompressible and the isentropic compressible Euler equation. Our criterion of maximal turbulence is plausible and leads to existence and uniqueness in a certain sense (in particular, the mean value and the total energy of different maximally turbulent solutions coincide). 
	\end{abstract}
	
	\bigskip
	
	\noindent\textbf{Keywords:} Measure-Valued Solutions, Admissibility Criteria, Turbulence, Incompressible Euler Equations, Isentropic Compressible Euler Equations
	
	\bigskip
	
	\noindent\textbf{MSC (2020) codes:} 35D99, 76M30 (primary), 35Q31, 76B03, 76N10, 76F99 (secondary) 
	
	\bigskip
	
	\tableofcontents

	\section{Introduction} \label{sec:intro} 
	
	The quest for a good solution concept for the partial differential equations (PDEs) arising in mathematical fluid dynamics is an outstanding open problem. Recent developments suggest that weak solutions do not serve as a satisfactory concept even if additional requirements such as energy inequalities are imposed. On the one hand this is due to the lack of uniqueness of such \emph{admissible} weak solutions (i.e.,~weak solutions which satisfy a certain energy inequality), which has been shown by convex integration, see e.g.~\name{De~Lellis}-\name{Sz{\'e}kelyhidi}~\cite{DelSze09,DelSze10}. On the other hand, weak solutions seem to be impractical if the solution of a certain system of PDEs is understood as a limit of approximate solutions. The latter is natural as we usually consider simplified models of the real world which must be interpreted as limits of some more involved models. Examples are the Euler equations, which arise as the vanishing viscosity limit of the Navier-Stokes equations, or incompressible models, which are understood as the low Mach number limit of compressible models, etc. Another important practical example is given by sequences of approximate solutions generated by numerical schemes. 
	
	In many cases such approximative sequences exhibit oscillations and concentrations which we would like to be captured by the corresponding limit. Note that weak solutions (which are functions taking values in the phase space) are not able to capture such oscillatory and concentrative behaviour, however measure-valued solutions are. A measure-valued solution can be interpreted as a family of probability measures on the phase space parametrized by the points $x$ in the physical domain. Such families are also known as \emph{Young measures}.
	
	Measure-valued solutions as introduced by \name{DiPerna}~\cite{DiPerna85} have been studied vastly in the literature. Similar to admissibility conditions imposed for weak solutions, one usually also requires measure-valued solutions to satisfy a certain energy inequality, which leads to the notion of \emph{admissible} measure-valued solutions. Notice furthermore that in order to incorporate concentrations properly, the notion of a \emph{generalized} Young measure is necessary, see e.g.~\name{Alibert}-\name{Bouchitt{\'e}}~\cite{AliBou97} and Sect.~\ref{subsec:young-measures} below. 
	
	It is important to note that measure-valued solutions are indeed a generalization of weak solutions, in particular every weak solution is also a (very special) measure-valued solution. Moreover, admissible measure-valued solutions have other important properties which make them a plausible solution concept, e.g.~they comply with the weak-strong uniqueness principle (see e.g.~\cite{BreDelSze11,GwiWie15,WiSurvey}). 
	
	Another important fact is that it is in many cases not difficult to prove existence of admissible measure-valued solutions for any initial data, while the existence of admissible weak solutions is an open problem for many PDEs in mathematical fluid dynamics. Note furthermore that measure-valued solutions seem to be a much better concept (compared to weak solutions) when turbulent flows are studied, since in turbulence theory many claims have to be understood in an averaged or statistical sense. Genuinely measure-valued solutions which are far away from being weak solutions are not just intuitive but also numerically supported, see \name{Fjordholm}-\name{Mishra}-\name{Tadmor}~\cite{FjoMisTad16}.
	
	Nevertheless, the consideration of measure-valued instead of weak solutions does not solve the non-uniqueness problem. Quite the contrary holds, namely that there are even more measure-valued solutions than weak solutions. Consequently, mathematicians have tried to identify the relevant solutions among the possibly many admissible measure-valued solutions or -- a bit less ambitious -- to rule out solutions that are not relevant. 
	
	On the one hand, several selection criteria for admissible \emph{weak} solutions have been studied. As shown by \name{Chiodaroli}-\name{Kreml}~\cite{ChiKre14}, maximal energy dissipation, which was suggested by \name{Dafermos}~\cite{Dafermos73} as a selection criterion, does not yield the expected solution in the context of a certain class of initial data for the multidimensional isentropic Euler equations. The second author proved in \cite{Markfelder24} that a local version of \name{Dafermos}' criterion fails in a similar fashion. Also minimizing the action as suggested by \name{Gimperlein et al.}~\cite{GGKS25} does not do the trick, see \name{Markfelder}-\name{Pellhammer}~\cite{MarPel25pre}. Even vanishing viscosity does in general not serve as a proper selection principle, see e.g.~\name{Colombo}-\name{Crippa}-\name{Sorella}~\cite{ColCriSor23} or \name{Huysmans}-\name{Titi}~\cite{HuyTit25}. These results suggest that there is no unique ``right'' solution among the weak ones, but rather an inherent non-uniqueness reflected by a genuinely measure-valued solution. An extensive discussion and numerical support for such a viewpoint is given in~\cite{FjoMisTad16}.
	
	In the context of measure-valued solutions, \name{Gallenm{\"u}ller}~\cite{Gallenmueller23} proposed a criterion which discards solutions to the incompressible Euler equations as unphysical if they are not obtained as low Mach number limits of solutions to the compressible Euler system. Similarly, \name{Gallenm{\"u}ller}-\name{Wiedemann}~\cite{GalWie21} ruled out solutions to the isentropic Euler equations if they do not arise as vanishing viscosity limits from the Navier-Stokes equations. Both criteria however still allow for a multitude of solutions, i.e.,~they do not lead to uniqueness. 
	
	In \cite[Sect.~4.2]{Lasarzik22}, \name{Lasarzik} considers measure-valued solutions to the incompressible Euler equations whose mean value minimizes the energy. The reader should notice that this does not imply that the energy of such solutions is minimal. Since the energy of the mean does not have a physical meaning, it is not clear what the relevance of this criterion is. Still there exists a unique solution which satisfies this criterion. 
	
	In \name{Breit}-\name{Feireisl}-\name{Hofmanov{\'a}}~\cite{BreFeiHof20_1} a multi-step selection process is carried out to identify a unique measure-valued solution of the isentropic Euler system. More precisely, one successively minimizes a countable family of cost functionals. This yields a unique minimizer. However the selected solution (i.e.~the minimizer) strongly depends on the functionals as well as the order under which they are considered. It remains unclear which functionals and which order leads to a physically relevant solution. In particular, the dependence on the order of the functionals is counterintuitive. 
	
	The latter criterion has been improved recently by \name{Feireisl}-\name{J{\"u}ngel}-\name{Luk{\'a}{\v c}ov{\'a}}~\cite{FeiJunLuk25pre}. Here only two steps are necessary. Again there is some freedom in choosing one of the functionals that are minimized, and it remains open which functional is a good choice in order to obtain a physically relevant solution.
	
	The aim of this paper is to present another criterion which selects measure-valued solutions. Motivated by the fact that selecting weak solutions has not lead to a proper solution concept (see above), we propose the counter project, namely to select a measure-valued solution which corresponds to a multitude of weak solutions. More precisely, we look for solutions which maximize the variance. This yields solutions which are \emph{as turbulent as possible}, or in other words \emph{maximally turbulent} in the sense that they represent the most spread out collection of (non-unique) weak solutions. The concept thus endorses the non-uniqueness of weak solutions instead of aiming to identify the unique ``right'' one. Keeping the aforementioned results on the failure of selecting weak solutions in mind, and in view of the non-deterministic nature of turbulence, we assess our selection criterion to be physically relevant.
	
	Unlike the criterion studied in \cite{FeiJunLuk25pre}, our criterion of \emph{maximal turbulence} is not at all related to criteria which maximize energy dissipation like the one proposed by \name{Defermos}~\cite{Dafermos73}. In addition to that, we don't see any link between maximal turbulence and vanishing viscosity. However, our concept is compatible with imposing preceding admissibility conditions. For example, instead of looking for the maximally turbulent solution within the set of all possible measure-valued solutions, one may restrict to the subset of all solutions which arise as vanishing viscosity limits. The theory established in this paper is sufficiently general to cover even these types of approaches. 
	
	Let us illustrate our concept with the following toy example. Consider two functions $v_1,v_2$ (which may be seen as weak solutions to a certain PDE, e.g.~the incompressible Euler equations). We may understand these functions as Young measures $\delta_{v_1},\delta_{v_2}$ (i.e.,~measure-valued solutions to the PDE). We consider the convex combinations\footnote{Note that convex combinations of measure-valued solutions to the Euler equations are again measure-valued solutions, see Sect.~\ref{sec:euler} for the details.} $\nu^\tau := \tau \delta_{v_1} + (1-\tau) \delta_{v_2}$ of $\delta_{v_1},\delta_{v_2}$ (where $\tau\in [0,1]$). We are looking for the most turbulent Young measure $\nu^{\max}$ in $\{ \nu^\tau\, |\, \tau\in [0,1]\}$, i.e.~the measure which represents the most spread out collection of $\delta$'s. In other words $\nu^{\max}$ is the measure which is furthest from $\delta_{v_1},\delta_{v_2}$, i.e.
	$$
	\nu^{\max} = \nu^{1/2} = \half \delta_{v_1} + \half \delta_{v_2}. 
	$$
	This Young measure $\nu^{\max}$ is also the maximizer of the functional 
	\begin{equation} \label{eq:functional}
		\V[\nu]:= \int \Var[\nu_x] \dx,
	\end{equation}
	where $\Var[\nu_x]= \langle \nu_{x}, |\cdot |^2 \rangle - \big|\langle \nu_{x}, \cdot \rangle \big|^2$ is the variance. Indeed, a simple computation yields
	\begin{align*}
		\V[\nu^\tau] &= \int \Big[ \langle \nu^\tau_{x}, |\cdot |^2 \rangle - \big|\langle \nu^\tau_{x}, \cdot \rangle \big|^2 \Big] \dx = \int \Big[ \tau |v_1|^2 + (1-\tau) |v_2|^2 - |\tau v_1 + (1-\tau) v_2 |^2 \Big] \dx \\
		&= \tau (1-\tau) \int |v_1-v_2|^2 \dx = \tau (1-\tau) \| v_1-v_2 \|_{L^2}^2,
	\end{align*} 
	which takes (for given $v_1\neq v_2$) its maximum at $\tau=\half$. Thus, it is plausible to look for Young measures which maximize the functional \eqref{eq:functional}. 
	
	In general, the variance $\Var[\nu_x]$ in \eqref{eq:functional} needs to be replaced by the Jensen defect of the energy. In our presentation, we will be even more general and just consider the Jensen defect of a convex function $f$, which leads to a functional $\V_f$. The aim is then to find a maximizer of $\V_f$ on a given subset $M$ of the set of all generalized Young measures. Our main results are the existence (see Thm.~\ref{thm:crit1-existence}) and uniqueness of the mean value (see Thm.~\ref{thm:crit1-uniqueness-meanvalue}) of such a maximizer under some assumptions (see \ref{item:A-f-strictlyconvex}-\ref{item:A-M-compact} below). As shown in Sect.~\ref{sec:euler} below, one may take $f$ to be the energy, and $M$ to be the set of all admissible measure-valued solutions to the Euler equations. So finally we are able to show existence of ``maximally turbulent'' measure-valued solutions of the Euler equations, and that the mean value and the energy of the maximizer is uniquely determined. 
	
	In this paper we will stick to the framework established by \name{Alibert}-\name{Bouchitt{\'e}}~\cite{AliBou97} in order to describe concentrations. In particular, we will not work with the notion of a \emph{dissipative measure-valued solution} as established by \name{Feireisl} and collaborators (see e.g.~\cite{BreFeiHof20_1}). The reason for this is that the former is more general (in the sense that it applies to a large class of PDEs) while the latter is tailored to a few particular systems of PDEs, e.g.~the Euler equations. Still our theory is valid in the context of dissipative measure-valued solutions as well, see Rem.~\ref{rem:DMV-solution} below. 
	
	This paper is organized as follows. We introduce some notation and state our criterion in a general setting in Sect.~\ref{sec:general}. We also prove our main results Thms.~\ref{thm:crit1-existence} and \ref{thm:crit1-uniqueness-meanvalue}, namely that under certain assumptions (see \ref{item:A-f-strictlyconvex}-\ref{item:A-M-compact}) a maximizer exists and that its mean value is uniquely determined. In Sect.~\ref{sec:euler} we apply our criterion to the incompressible and to the isentropic compressible Euler equations. In both cases, we consider $f$ to be the energy, and $M$ to be the set of all admissible measure-valued solutions, and we show that with this choice the assumptions \ref{item:A-f-strictlyconvex}-\ref{item:A-M-compact} hold. This allows to apply the theory which we established in Sect.~\ref{sec:general}.

	\section{A maximality criterion} \label{sec:general} 
	
	\subsection{Generalized Young measures} \label{subsec:young-measures}
	
	We first introduce some basic notation, where we follow \name{Sz{\'e}kelyhidi}-\name{Wiedemann}~\cite{SzeWie12}. We denote the space of finite Radon measures on a locally compact separable metric space $X$ by $\meas(X)$. Note that $\meas(X)$ can be identified with the dual space of $C_0(X)$, where $C_0(X)$ is the completion of $C_{\rm c}(X)$ (the space of continuous functions with compact support) with respect to the supremum norm. The space of non-negative Radon measures and the space of probability measures on $X$ are denoted by $\meas^+(X)$ and $\probmeas(X)$, respectively. 
	
	For $\Omega\subset \R^n$ open or closed, $\mu\in \meas^+(\Omega)$ and $X\subset \R^m$ open or closed, a map $\nu:\Omega\to \probmeas(X)$ is weakly-$\ast$ $\mu$-measurable if 
	$$
	x\mapsto \langle \nu_x , f \rangle := \int_{X} f(z) \dnu_x(z)
	$$
	is $\mu$-measurable for any bounded Borel function $f:X\to \R$ (i.e.~the pre-image $f^{-1}(A)$ of any open subset $A\subset X$ is Borel-measurable). The space of all weakly-$\ast$ $\mu$-measurable maps from $\Omega$ into $\probmeas(X)$ is denoted by $L^\infty_\weak(\Omega,\mu;\probmeas(X))$. If $\mu$ is the Lebesgue measure, we just write $L^\infty_\weak(\Omega;\probmeas(X))$.
	
	We work with the following notion of a generalized Young measure, which goes back to \name{Alibert}-\name{Bouchitt{\'e}}~\cite{AliBou97}, see also \cite[Chap.~12]{Rindler}, \cite[Sect.~2]{BreDelSze11}, \cite[Sect.~2.2]{SzeWie12}, \cite[Sect.~3.3.1]{WiedemannPHD}. 
	
	\begin{defn}[{See \cite[Sect.~2.2]{SzeWie12}}] \label{defn:GYM}
		Let $\Omega\subset \R^n$ open and bounded ($n\in\N$) and $m\in \N$. A \emph{generalized Young measure} is a triple $(\nu,\lambda,\nu^\infty)$, where 
		\begin{itemize}
			\item $(\nu_x)_{x\in \Omega}$ is a (classical) Young measure, i.e.~a weakly-$\ast$ $\dx$-measurable family of probability measures on $\R^m$ (in short $\nu\in L^\infty_\weak(\Omega;\probmeas(\R^m))$), 
			
			\item $\lambda$ is a non-negative measure on $\closure{\Omega}$ (in short $\lambda\in \meas^+(\closure{\Omega})$), 
			
			\item $(\nu^\infty_x)_{x\in \closure{\Omega}}$ is a weakly-$\ast$ $\lambda$-measurable family of probability measures on $\sphere{m-1}$ (in short $\nu^\infty\in L^\infty_\weak(\closure{\Omega},\lambda;\probmeas(\sphere{m-1}))$),
		\end{itemize}
		which satisfies\footnote{In the context of measure-valued solutions, some authors do not explicitly require \eqref{eq:GYM-bound}. However, in those cases \eqref{eq:GYM-bound} follows from the energy inequality, see also Rem.~\ref{rem:EnIneq->GYMbound}.} 
		\begin{equation} \label{eq:GYM-bound}
			\int_\Omega \langle \nu_x, |\cdot |^2\rangle \dx + \lambda(\closure{\Omega})<\infty.
		\end{equation} 
		We call $\nu,\lambda,\nu^\infty$ \emph{oscillation measure}, \emph{concentration measure} and \emph{concentration-angle measure}, respectively. We denote the set of all generalized Young measures by $\setGYM$, i.e.
		$$
		\setGYM := \left\{ (\nu,\lambda,\nu^\infty) \in L^\infty_\weak(\Omega; \probmeas(\R^m)) \times \meas^+(\closure{\Omega}) \times L^\infty_\weak(\closure{\Omega},\lambda; \probmeas(\sphere{m-1}))\, \Big|\, \text{\eqref{eq:GYM-bound} holds} \right\}.
		$$  
		We endow $\setGYM$ with the usual weak-$\ast$ topology, i.e.~a sequence $(\nu^k,\lambda^k,(\nu^\infty)^k)_{k\in \N}\subset \setGYM$ converges to $(\nu,\lambda,\nu^\infty)\in \setGYM$ if and only if
		\begin{equation} \label{eq:convergence}
			\langle \nu^k_x, f \rangle \dx + \langle (\nu^\infty)^k_x, f^\infty \rangle \lambda^k \mathop{\rightharpoonup}\limits^{\ast} \langle \nu_x, f \rangle \dx + \langle \nu^\infty_x, f^\infty \rangle \lambda \quad \text{ in }\meas(\closure{\Omega}) \text{ for all }f\in \mathcal{F}_2(\R^m). 
		\end{equation}
		Here 
		$$
		\mathcal{F}_2(\R^m) := \left\{ f\in C(\R^m) \,\Big| \, \exists f_0\in \mathcal{A}(\R^m) \text{ s.t.~} f=(1+|\cdot|^2) f_0 \right\},
		$$
		where 
		$$
		\mathcal{A}(\R^m) := \left\{ f_0\in \Cb(\R^m) \,\Big| \, \lim_{s\to \infty} f_0(sz) \text{ exists and is continuous in } z\in \sphere{m-1} \right\}.
		$$
		The \emph{$2$-recession function $f^\infty$} of $f\in \mathcal{F}_2(\R^m)$ is defined by 
		$$
		f^\infty: \sphere{m-1} \to \R, \quad f^\infty (z) := \lim_{s\to \infty} f_0(sz) = \lim_{s\to \infty} \frac{f(sz)}{1+s^2}. 
		$$
	\end{defn} 
	
	Note that the $2$-recession function $f^\infty$, which represents the behavior of $f$ at infinity, exists and is continuous on $\sphere{m-1}$ according to the definition of $\mathcal{A}(\R^m)$.
	
	Note furthermore that \eqref{eq:convergence} means 
	\begin{align*}
		&\int_\Omega \phi(x) \langle \nu^k_x, f \rangle \dx + \int_{\closure{\Omega}} \phi(x) \langle (\nu^\infty)^k_x, f^\infty \rangle \dlambda^k(x) \\
		&\to \int_\Omega \phi(x) \langle \nu_x, f \rangle \dx + \int_{\closure{\Omega}} \phi(x) \langle \nu^\infty_x, f^\infty \rangle \dlambda(x) & &\text{ for all }\phi\in C(\closure{\Omega}),\ f\in \mathcal{F}_2(\R^m).
	\end{align*} 
	
	\begin{rem} \label{rem:p}
		The reader should notice that one could be more general by working with an arbitrary growth factor $p\in [1,\infty)$ instead of restricting to the case $p=2$. Then one needs to replace the integral in \eqref{eq:GYM-bound} by $\int_\Omega \langle \nu_x, |\cdot |^p\rangle \dx$, and one has to consider the set
		$$ 
		\mathcal{F}_p(\R^m) = \left\{ f\in C(\R^m) \,\Big| \, \exists f_0\in \mathcal{A}(\R^m) \text{ s.t.~} f=(1+|\cdot|^p) f_0 \right\}.
		$$
		In \cite{Rindler} $p$ is chosen to be $1$; in the context of the (incompressible) Euler equations, see \cite{BreDelSze11,SzeWie12,WiedemannPHD}, one usually considers $p=2$. For our purposes the case $p=2$ suffices as well. Still we would like to emphasize that our theory holds for other choices of $p\in [1,\infty)$, too.
	\end{rem}
	
	Next we state what a convex combination of two generalized Young measures is. 
	
	\begin{defn} \label{defn:convexity}
		Let $\tau \in [0,1]$. The convex combination $(\widehat{\nu},\widehat{\lambda},\widehat{\nu}^\infty)\in \setGYM$ of two generalized Young measures $\big(\nu^1,\lambda^1,(\nu^\infty)^1\big), \big(\nu^2,\lambda^2,(\nu^\infty)^2\big) \in\setGYM$ is given by
		\begin{equation} \label{eq:defn-convexity} 
			\langle \widehat{\nu}_x , f \rangle \dx + \langle \widehat{\nu}^\infty_x , f^\infty \rangle \widehat{\lambda} = \tau \Big( \langle \nu^1_x , f \rangle \dx + \langle (\nu^\infty)^1_x , f^\infty \rangle \lambda^1 \Big) + (1-\tau) \Big( \langle \nu^2_x , f \rangle \dx + \langle (\nu^\infty)^2_x , f^\infty \rangle \lambda^2 \Big), 
		\end{equation}
		for all $f\in \mathcal{F}_2(\R^m)$ and a.e.~$x\in \Omega$. 
	\end{defn}
	
	The following lemma clarifies Defn.~\ref{defn:convexity}. 
	
	\begin{lemma} \label{lemma:convexity}
		For all $\tau \in [0,1]$ and all $\big(\nu^1,\lambda^1,(\nu^\infty)^1\big), \big(\nu^2,\lambda^2,(\nu^\infty)^2\big) \in\setGYM$ it holds that 
		\begin{align}
			\widehat{\nu} = \tau \nu^1 + (1-\tau)\nu^2, \label{eq:lemma-convexity-osc} \\
			\widehat{\lambda} = \tau\lambda^1 + (1-\tau)\lambda^2 . \label{eq:lemma-convexity-conc}
		\end{align} 
		In other words the oscillation measure $\widehat{\nu}$ and the concentration measure $\widehat{\lambda}$ of a convex combination of two generalized Young measures $\big(\nu^1,\lambda^1,(\nu^\infty)^1\big), \big(\nu^2,\lambda^2,(\nu^\infty)^2\big) \in\setGYM$ are indeed the convex combinations of the oscillation measures $\nu^1,\nu^2$ and the concentration measures $\lambda^1,\lambda^2$ respectively.  
	\end{lemma}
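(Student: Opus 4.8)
The plan is to recover the two claimed identities directly from the defining relation \eqref{eq:defn-convexity} by testing it against well-chosen functions $f\in\mathcal{F}_2(\R^m)$: first functions whose $2$-recession function vanishes, which annihilates every concentration term and leaves a statement purely about the oscillation measures, and then the single function $f=1+|\cdot|^2$, which then pins down the concentration measure.

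\emph{Step 1 (oscillation measure).} I would first note that $\Cc(\R^m)\subset\mathcal{F}_2(\R^m)$ with vanishing recession function: for $g\in\Cc(\R^m)$ put $g_0:=g/(1+|\cdot|^2)\in\Cc(\R^m)\subset\mathcal{A}(\R^m)$, so that $g=(1+|\cdot|^2)g_0$ and $g_0(sz)=0$ for $s$ large, hence $g^\infty\equiv0$. For such $g$ all three terms involving $\nu^\infty$, $(\nu^\infty)^1$, $(\nu^\infty)^2$ in \eqref{eq:defn-convexity} vanish, so \eqref{eq:defn-convexity} becomes the equality of measures
\[
\langle\widehat\nu_x,g\rangle\dx=\big(\tau\langle\nu^1_x,g\rangle+(1-\tau)\langle\nu^2_x,g\rangle\big)\dx\qquad\text{on }\closure\Omega;
\]
since $g$ and $\Omega$ are bounded, all densities lie in $L^1(\Omega)$, so $\langle\widehat\nu_x,g\rangle=\tau\langle\nu^1_x,g\rangle+(1-\tau)\langle\nu^2_x,g\rangle$ for a.e.\ $x$. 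The exceptional null set a priori depends on $g$; the one point that needs a little care is to remove this dependence. I would fix a countable dense subset $\{g_j\}_{j\in\N}$ of $C_0(\R^m)$ with $g_j\in\Cc(\R^m)$, intersect the countably many null sets, and conclude that on the complement the identity holds simultaneously for all $g_j$, hence by density for all $g\in C_0(\R^m)$. As $\widehat\nu_x,\nu^1_x,\nu^2_x$ are probability measures and finite Radon measures are determined by their action on $C_0(\R^m)$, this gives $\widehat\nu_x=\tau\nu^1_x+(1-\tau)\nu^2_x$ for a.e.\ $x$, i.e.\ \eqref{eq:lemma-convexity-osc}.

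\emph{Step 2 (concentration measure).} Next I would test \eqref{eq:defn-convexity} with $f=1+|\cdot|^2\in\mathcal{F}_2(\R^m)$ (take $f_0\equiv1$), which has $f^\infty\equiv1$, obtaining the equality of measures on $\closure\Omega$
\[
\langle\widehat\nu_x,1+|\cdot|^2\rangle\dx+\widehat\lambda=\tau\big(\langle\nu^1_x,1+|\cdot|^2\rangle\dx+\lambda^1\big)+(1-\tau)\big(\langle\nu^2_x,1+|\cdot|^2\rangle\dx+\lambda^2\big).
\]
By Step 1 and linearity of the integral, $\langle\widehat\nu_x,1+|\cdot|^2\rangle=\tau\langle\nu^1_x,1+|\cdot|^2\rangle+(1-\tau)\langle\nu^2_x,1+|\cdot|^2\rangle$ for a.e.\ $x$, and by \eqref{eq:GYM-bound} for $\nu^1,\nu^2$ these densities lie in $L^1(\Omega)$, so the absolutely continuous parts cancel on both sides and $\widehat\lambda=\tau\lambda^1+(1-\tau)\lambda^2$ remains, which is \eqref{eq:lemma-convexity-conc}.

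\emph{Main obstacle.} Conceptually there is no serious obstacle; the proof is essentially bookkeeping with the weak-$\ast$ pairing. The only delicate point is the passage in Step 1 from ``for each fixed test function, for a.e.\ $x$'' to ``for a.e.\ $x$, for all test functions'', handled by the standard separability argument. (One may also wish to observe beforehand that the triple defined by \eqref{eq:defn-convexity} indeed lies in $\setGYM$, but this is immediate once Steps 1 and 2 are known, since then \eqref{eq:GYM-bound} for $(\widehat\nu,\widehat\lambda,\widehat\nu^\infty)$ is just the corresponding convex combination of the bounds for the two given generalized Young measures.)
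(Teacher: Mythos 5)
Your proof is correct and follows essentially the same route as the paper: test \eqref{eq:defn-convexity} first against functions with vanishing recession function to isolate the oscillation part, then against a function with $f^\infty\equiv 1$ (the paper uses $|\cdot|^2$, you use $1+|\cdot|^2$) to extract the concentration part. You are merely more explicit about the measure-theoretic bookkeeping (the $g$-dependent null set and the separability argument), which the paper leaves implicit.
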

	
	\begin{proof} 
		In order to show \eqref{eq:lemma-convexity-osc}, it suffices to prove that 
		$$
		\langle \widehat{\nu}_x , f \rangle = \tau \langle \nu^1_x , f \rangle + (1-\tau) \langle \nu^2_x , f \rangle \qquad \text{ for all }f\in\Cb(\R^m)\text{ and a.e. }x\in \Omega.
		$$ 
		The latter follows immediately from \eqref{eq:defn-convexity} since $f^\infty\equiv 0$ for any $f\in\Cb(\R^m)$. The choice $f= |\cdot|^2$ (which means $f^\infty\equiv 1$) together with \eqref{eq:lemma-convexity-osc} yields \eqref{eq:lemma-convexity-conc}.
	\end{proof}

	\subsection{The functional $\V_f$ and its properties} 
	
	Next we consider a convex, non-negative function $f\in \mathcal{F}_2(\R^m)$. Then we define the functional $\V_f$ as follows. 
	
	\begin{defn} \label{defn:V} 
		For a generalized Young measure $(\nu,\lambda,\nu^\infty)\in\setGYM$, we set 
		$$
		\V_f[\nu,\lambda,\nu^\infty] = \int_\Omega \Big[\langle \nu_{x}, f \rangle - f\big(\langle \nu_{x}, \cdot \rangle \big) \Big] \dx + \int_{\closure{\Omega}} \langle \nu_x^\infty , f^\infty \rangle \dlambda(x).
		$$ 
	\end{defn} 
	
	\begin{rem} \label{rem:example-variance} 
		As an example, one could consider $f=|\cdot|^2$. Then $f^\infty\equiv 1$ and hence
		$$
		\V_f[\nu,\lambda,\nu^\infty] = \int_\Omega \Var[\nu_x] \dx + \lambda(\closure{\Omega})
		$$
		with the variance $\Var[\nu_x]= \langle \nu_{x}, |\cdot |^2 \rangle - \big|\langle \nu_{x}, \cdot \rangle \big|^2$. When applying our theory to the Euler equations (see Sect.~\ref{sec:euler} below), the most natural choice for $f$ is the energy. Note that the energy in the incompressible setting is given by $\half |\cdot|^2$, see Sect.~\ref{subsec:incomp-euler} below. So (up to a factor $\half$) the choice $f=|\cdot|^2$, which leeds to the variance, see above, coincides with choosing the energy.
	\end{rem}
	
	We observe that $\V_f$ takes values in $[0,\infty)$, which is the content of the following proposition.
	
	\begin{prop} \label{prop:bounds-V}
		It holds that 
		$$
		0\leq \V_f[\nu,\lambda,\nu^\infty]<\infty \qquad \text{ for all } (\nu,\lambda,\nu^\infty)\in \setGYM.
		$$
	\end{prop}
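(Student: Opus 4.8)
The plan is to treat the three contributions to $\V_f$ separately: the positive oscillation term $\int_\Omega \langle \nu_x, f\rangle\,\dx$, the subtracted Jensen term $\int_\Omega f(\langle \nu_x,\cdot\rangle)\,\dx$, and the concentration term $\int_{\closure\Omega}\langle\nu_x^\infty,f^\infty\rangle\dlambda$. For the lower bound, I would first note that $f^\infty\geq 0$ on $\sphere{m-1}$: writing $f=(1+|\cdot|^2)f_0$ with $f_0\in\mathcal{A}(\R^m)$, non-negativity of $f$ forces $f_0\geq 0$, hence $f^\infty(z)=\lim_{s\to\infty}f_0(sz)\geq 0$. Since $\lambda\geq 0$ and each $\nu_x^\infty$ is a probability measure, the concentration term is $\geq 0$. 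For the oscillation part, the bound \eqref{eq:GYM-bound} gives $\langle\nu_x,|\cdot|^2\rangle<\infty$, hence $\langle\nu_x,|\cdot|\rangle<\infty$, for a.e.\ $x$, so the barycentre $\langle\nu_x,\cdot\rangle\in\R^m$ is well-defined a.e.; applying Jensen's inequality to the convex function $f$ and the probability measure $\nu_x$ yields $f(\langle\nu_x,\cdot\rangle)\leq\langle\nu_x,f\rangle$ a.e., so the integrand of the first term is $\geq 0$. Summing, $\V_f\geq 0$.

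For finiteness, I would use $|f|=(1+|\cdot|^2)|f_0|\leq\|f_0\|_\infty(1+|\cdot|^2)$ (recall $f_0\in\Cb(\R^m)$), so that $0\leq\langle\nu_x,f\rangle\leq\|f_0\|_\infty(1+\langle\nu_x,|\cdot|^2\rangle)$, which is integrable over the bounded set $\Omega$ by \eqref{eq:GYM-bound}. Since $0\leq\langle\nu_x,f\rangle-f(\langle\nu_x,\cdot\rangle)\leq\langle\nu_x,f\rangle$ (the left inequality being Jensen, the right one because $f\geq 0$), the first integral is finite. For the concentration term, $f^\infty$ is continuous on the compact sphere $\sphere{m-1}$; in fact $\|f^\infty\|_{L^\infty(\sphere{m-1})}\leq\|f_0\|_\infty$, so $\langle\nu_x^\infty,f^\infty\rangle\leq\|f_0\|_\infty$ and this integrates against $\lambda$ to at most $\|f_0\|_\infty\,\lambda(\closure\Omega)<\infty$, again by \eqref{eq:GYM-bound}. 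Adding the estimates gives $\V_f<\infty$.

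The only genuinely delicate points are measurability issues, needed so that the integrals defining $\V_f$ make sense: that $x\mapsto\langle\nu_x,f\rangle$ is measurable, and that $x\mapsto f(\langle\nu_x,\cdot\rangle)$ is measurable. The former follows by extending weak-$\ast$ measurability from bounded Borel functions to $f\in\mathcal{F}_2(\R^m)$ (approximate $f$ by truncations and pass to the limit using the $|\cdot|^2$-bound together with dominated convergence), and the latter follows from measurability of the barycentre map $x\mapsto\langle\nu_x,\cdot\rangle$ (obtained in the same way, componentwise) composed with the continuous function $f$. I expect this to be the only part requiring care, and it is standard bookkeeping within the Alibert--Bouchitt\'e framework, so at most a sentence or a pointer to the literature is needed; the rest of the argument is the elementary estimate above.
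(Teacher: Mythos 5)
Your proof is correct and follows essentially the same route as the paper's: the growth bound $f\leq C(1+|\cdot|^2)$ combined with \eqref{eq:GYM-bound} for finiteness, Jensen's inequality for the lower bound, and the sup of $f^\infty$ on the compact sphere for the concentration term. Your explicit verification that $f^\infty\geq 0$ and your remarks on measurability are sensible additions the paper leaves implicit, but they do not change the argument.
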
 
	
	\begin{proof} 
		Since $f\in \mathcal{F}_2(\R^m)$, there exists $C>0$ such that $f(z)\leq C (1+|z|^2)$ for all $z\in \R^m$. Consequently, \eqref{eq:GYM-bound} implies 
		$$
		\int_\Omega \langle \nu_{x}, f \rangle \dx \leq C \int_\Omega \Big[ \langle \nu_{x}, 1 \rangle + \langle \nu_{x}, |\cdot |^2 \rangle \Big] \dx < \infty.
		$$
		As $f$ is non-negative, we simply find 
		$$
		-\int_\Omega f\big(\langle \nu_{x}, \cdot \rangle \big) \dx \leq 0.
		$$
		Finally 
		$$
		\int_{\closure{\Omega}} \langle \nu_x^\infty , f^\infty \rangle \dlambda(x) \leq \Big( \max_{\theta\in \sphere{m-1}}f^\infty(\theta) \Big) \lambda(\closure{\Omega}) < \infty,
		$$
		so we have shown that $\V_f[\nu,\lambda,\nu^\infty]$ is finite for any $(\nu,\lambda,\nu^\infty)\in \setGYM$.
		
		For the lower bound we invoke Jensen's inequality, which yields 
		$$
		f\big( \langle \nu_{x}, \cdot \rangle \big) \leq \langle \nu_{x}, f \rangle \qquad \text{ for a.e. } x\in \Omega,
		$$ 
		and thus $\V_f[\nu,\lambda,\nu^\infty]\geq 0$ for all $(\nu,\lambda,\nu^\infty)\in\setGYM$ as desired. 
	\end{proof}
	
	Next, we prove some important properties of the map $\V_f: \setGYM \to [0,\infty)$. 
	
	\begin{lemma} \label{lemma:properties-V} 
		The functional $\V_f: \setGYM \to [0,\infty)$ has the following properties:
		\begin{enumerate} 
			\item \label{item:prop-V-concave} $\V_f$ is concave, i.e.~for all $(\nu^1,\lambda^1,(\nu^\infty)^1),(\nu^2,\lambda^2,(\nu^\infty)^2) \in \setGYM$ and $\tau\in [0,1]$ it holds that  
			\begin{equation} \label{eq:V-concave} 
				\V_f [\widehat{\nu},\widehat{\lambda},\widehat{\nu}^\infty] \geq \tau \V_f [\nu^1,\lambda^1,(\nu^\infty)^1] + (1-\tau) \V_f [\nu^2,\lambda^2,(\nu^\infty)^2],
			\end{equation}
			with the convex combination $(\widehat{\nu},\widehat{\lambda},\widehat{\nu}^\infty)$ of $(\nu^1,\lambda^1,(\nu^\infty)^1),(\nu^2,\lambda^2,(\nu^\infty)^2)$ as defined in Defn.~\ref{defn:convexity}. \\
			As soon as $f$ is \emph{strictly} convex and $\tau\in (0,1)$, equality in \eqref{eq:V-concave} holds if and only if $\langle \nu^1_{x} , \cdot\rangle = \langle \nu^2_{x} , \cdot\rangle$ for a.e. $x\in \Omega$. 
			
			\item \label{item:prop-V-usc} $\V_f$ is upper semi-continuous with respect to the weak-$\ast$ topology, i.e.~for all sequences $(\nu^k,\lambda^k,(\nu^\infty)^k)_{k\in \N}\subset \setGYM$ which converge to some $(\nu,\lambda,\nu^\infty)\in \setGYM$ in the weak-$\ast$ topology it holds that
			\begin{equation} \label{eq:V-semi-continuous}
				\limsup_{k\to\infty} \V_f[\nu^k,\lambda^k,(\nu^\infty)^k] \leq \V_f[\nu,\lambda,\nu^\infty].
			\end{equation}
		\end{enumerate}	
	\end{lemma}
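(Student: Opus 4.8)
Write $\V_f$ as the difference of a ``linear part'' and a ``barycenter part'', $\V_f = \mathcal{L} - \mathcal{C}$, where
\[
\mathcal{L}[\nu,\lambda,\nu^\infty] := \int_\Omega \langle \nu_x, f\rangle \dx + \int_{\closure{\Omega}} \langle \nu^\infty_x, f^\infty\rangle \dlambda(x), \qquad \mathcal{C}[\nu,\lambda,\nu^\infty] := \int_\Omega f\big(\langle \nu_x, \cdot\rangle\big) \dx .
\]
By (the proof of) Prop.~\ref{prop:bounds-V}, each of $\mathcal{L}$, $\mathcal{C}$ and $\V_f$ is finite and non-negative on $\setGYM$, so this splitting carries no $\infty-\infty$ ambiguity. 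The plan is to establish two structural facts: $\mathcal{L}$ is \emph{affine} along convex combinations and weak-$\ast$ \emph{continuous}, while $\mathcal{C}$ is \emph{convex} along convex combinations and weak-$\ast$ \emph{lower} semicontinuous. Both \eqref{eq:V-concave} and \eqref{eq:V-semi-continuous} then follow by subtraction, and the equality case in \eqref{eq:V-concave} is read off from that of $\mathcal{C}$.

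For $\mathcal{L}$: affinity is immediate from Defn.~\ref{defn:convexity}, since evaluating the measure identity \eqref{eq:defn-convexity} on all of $\closure{\Omega}$ (testing with $\phi\equiv1$) for the given $f$ yields $\mathcal{L}[\widehat\nu,\widehat\lambda,\widehat\nu^\infty] = \tau\,\mathcal{L}[\nu^1,\lambda^1,(\nu^\infty)^1] + (1-\tau)\,\mathcal{L}[\nu^2,\lambda^2,(\nu^\infty)^2]$; weak-$\ast$ continuity is exactly \eqref{eq:convergence} tested with $\phi\equiv1$ and this $f\in\mathcal{F}_2(\R^m)$. For the convexity of $\mathcal{C}$, I would use Lemma~\ref{lemma:convexity} to get $\langle\widehat\nu_x,\cdot\rangle = \tau\langle\nu^1_x,\cdot\rangle + (1-\tau)\langle\nu^2_x,\cdot\rangle$ for a.e.\ $x$, so convexity of $f$ gives the pointwise inequality $f(\langle\widehat\nu_x,\cdot\rangle) \le \tau f(\langle\nu^1_x,\cdot\rangle) + (1-\tau) f(\langle\nu^2_x,\cdot\rangle)$; integrating over $\Omega$ and subtracting from the $\mathcal{L}$-identity proves \eqref{eq:V-concave}. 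Since $\mathcal{L}$ is affine, equality in \eqref{eq:V-concave} is equivalent to equality in this integrated Jensen inequality, which (both integrands being $L^1$ and non-negative) forces a.e.\ pointwise equality; when $f$ is strictly convex and $\tau\in(0,1)$ that happens precisely when $\langle\nu^1_x,\cdot\rangle = \langle\nu^2_x,\cdot\rangle$ a.e., which is the claimed characterization (the converse being trivial).

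For part (b), the continuity of $\mathcal{L}$ already gives $\mathcal{L}[\nu^k,\dots]\to\mathcal{L}[\nu,\dots]$, so it remains to show $\liminf_k \mathcal{C}[\nu^k,\dots] \ge \mathcal{C}[\nu,\dots]$; then $\limsup_k \V_f[\nu^k,\dots] \le \mathcal{L}[\nu,\dots] - \mathcal{C}[\nu,\dots] = \V_f[\nu,\dots]$. I would pass to the barycenters $b^k(x) := \langle\nu^k_x,\cdot\rangle\in\R^m$, so $\mathcal{C}[\nu^k,\dots] = \int_\Omega f(b^k)\dx$. Testing \eqref{eq:convergence} with $f=|\cdot|^2$ (hence $f^\infty\equiv1$) and $\phi\equiv1$ shows $\int_\Omega\langle\nu^k_x,|\cdot|^2\rangle\dx$ and $\lambda^k(\closure{\Omega})$ stay bounded (their non-negative sum converges), and since $|b^k(x)|^2 \le \langle\nu^k_x,|\cdot|^2\rangle$ by Jensen, the sequence $(b^k)$ is bounded in $L^2(\Omega;\R^m)$; along a subsequence $b^k\rightharpoonup g$ weakly in $L^2$. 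On the other hand each coordinate map $z\mapsto z_i$ lies in $\mathcal{F}_2(\R^m)$ with vanishing $2$-recession function, so \eqref{eq:convergence} with arbitrary $\phi\in C(\closure{\Omega})$ forces $\int_\Omega \phi\, b^k_i \dx \to \int_\Omega \phi\, \langle\nu_x,(\cdot)_i\rangle\dx$ for each $i$; comparing with the weak $L^2$ limit identifies $g(x) = \langle\nu_x,\cdot\rangle =: b(x)$ for a.e.\ $x$, and as the limit is subsequence-independent, $b^k\rightharpoonup b$ weakly in $L^2$ along the whole sequence. Finally, $f$ being convex, continuous and non-negative, the integral functional $u\mapsto\int_\Omega f(u)\dx$ is sequentially weakly lower semicontinuous on $L^2(\Omega;\R^m)$ (the classical convexity $\Rightarrow$ weak lower semicontinuity theorem for integral functionals), which gives $\liminf_k \mathcal{C}[\nu^k,\dots] = \liminf_k\int_\Omega f(b^k)\dx \ge \int_\Omega f(b)\dx = \mathcal{C}[\nu,\dots]$.

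The affinity and continuity of $\mathcal{L}$ and the pointwise Jensen step for the convexity of $\mathcal{C}$ are routine. The step needing genuine care — the main obstacle — is the weak-$\ast$ lower semicontinuity of the barycenter functional $\mathcal{C}$: one must upgrade weak-$\ast$ convergence of the triples to weak $L^2$-convergence of the barycenters \emph{and} correctly identify the limit, the point being that the concentration measures $\lambda^k$ do not contribute to the barycenter because the $2$-recession functions of the coordinate maps vanish. Once the barycenters are under control, the conclusion follows from standard convex integral functional theory.
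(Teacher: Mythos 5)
Your argument is correct and follows essentially the same route as the paper: the same splitting into an affine ``linear part'' plus a Jensen-defect term, the pointwise convexity inequality with the strict-convexity equality case for part (a), and weak convergence of the barycenters combined with weak lower semicontinuity of the convex integral functional $u\mapsto\int_\Omega f(u)\dx$ for part (b). The only difference is that you carry out in detail (uniform $L^2$ bound on the barycenters via Jensen, identification of the weak limit through the coordinate maps with vanishing $2$-recession function) the step that the paper delegates to a citation of \cite[Lemma~A.7.3]{Markfelder}.
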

	
	\begin{proof} 
		Let $(\nu^1,\lambda^1,(\nu^\infty)^1),(\nu^2,\lambda^2,(\nu^\infty)^2) \in \setGYM$ two generalized Young measures, and $\tau\in [0,1]$. Using Lemma~\ref{lemma:convexity}, we find 
		\begin{align*}
			\V_f [\widehat{\nu},\widehat{\lambda},\widehat{\nu}^\infty] &= \int_\Omega \Big[\langle \tau \nu^1_{x} + (1-\tau) \nu^2_x, f \rangle - f\big(\langle \tau \nu^1_{x} + (1-\tau) \nu^2_{x}, \cdot \rangle \big) \Big] \dx \\
			&\qquad + \tau \int_{\closure{\Omega}} \langle (\nu^\infty)_x^1 , f^\infty \rangle \dlambda^1(x) + (1-\tau) \int_{\closure{\Omega}} \langle (\nu^\infty)_x^2 , f^\infty \rangle \dlambda^2(x)\\
			&= \int_\Omega \Big[ \tau \langle \nu^1_{x}, f \rangle + (1-\tau) \langle \nu^2_x, f \rangle - f\big(\tau \langle \nu^1_{x}, \cdot \rangle + (1-\tau) \langle \nu^2_{x}, \cdot \rangle \big) \Big] \dx \\
			&\qquad + \tau \int_{\closure{\Omega}} \langle (\nu^\infty)_x^1 , f^\infty \rangle \dlambda^1(x) + (1-\tau) \int_{\closure{\Omega}} \langle (\nu^\infty)_x^2 , f^\infty \rangle \dlambda^2(x).
		\end{align*}
		The convexity of $f$ yields 
		\begin{equation} \label{eq:201}
			f\big(\tau \langle \nu^1_{x}, \cdot \rangle + (1-\tau) \langle \nu^2_{x}, \cdot \rangle \big) \leq \tau f\big(\langle \nu^1_{x}, \cdot \rangle\big) + (1-\tau) f\big(\langle \nu^2_{x}, \cdot \rangle \big)\qquad \text{ for a.e. }x\in\Omega,
		\end{equation}
		and hence 
		\begin{align*}
			\V_f [\widehat{\nu},\widehat{\lambda},\widehat{\nu}^\infty] &\geq \int_\Omega \Big[ \tau \langle \nu^1_{x}, f \rangle + (1-\tau) \langle \nu^2_x, f \rangle - \tau f\big(\langle \nu^1_{x}, \cdot \rangle\big) - (1-\tau) f\big(\langle \nu^2_{x}, \cdot \rangle \big) \Big] \dx \\
			&\qquad + \tau \int_{\closure{\Omega}} \langle (\nu^\infty)_x^1 , f^\infty \rangle \dlambda^1(x) + (1-\tau) \int_{\closure{\Omega}} \langle (\nu^\infty)_x^2 , f^\infty \rangle \dlambda^2(x) \\
			&= \tau \V_f [\nu^1,\lambda^1,(\nu^\infty)^1] + (1-\tau) \V_f [\nu^2,\lambda^2,(\nu^\infty)^2].
		\end{align*}
		
		Now let $f$ be \emph{strictly} convex and $\tau\in (0,1)$. In this case, we have equality in \eqref{eq:201} if and only if $\langle \nu^1_{x} , \cdot\rangle = \langle \nu^2_{x} , \cdot\rangle$. Consequently, equality in \eqref{eq:V-concave} holds if and only if $\langle \nu^1_{x} , \cdot\rangle = \langle \nu^2_{x} , \cdot\rangle$ for a.e.~$x\in \Omega$ as desired. 
		
		It remains to show \ref{item:prop-V-usc}. To this end, consider a sequence $(\nu^k,\lambda^k,(\nu^\infty)^k)_{k\in \N}\subset \setGYM$ which converges to $(\nu,\lambda,\nu^\infty)\in \setGYM$ in the weak-$\ast$ topology. We first observe that this immediately yields 
		$$
		\int_\Omega \langle \nu_{x}^k, f \rangle \dx + \int_{\closure{\Omega}} \langle (\nu^\infty)_x^k , f^\infty \rangle \dlambda^k(x) \to \int_\Omega \langle \nu_{x}, f \rangle \dx + \int_{\closure{\Omega}} \langle \nu_x^\infty , f^\infty \rangle \dlambda(x).
		$$ 
		Moreover, we observe that $(\nu^k,\lambda^k,(\nu^\infty)^k) \mathop{\rightharpoonup}\limits^\ast (\nu,\lambda,\nu^\infty)$ implies
		$$ 
		\langle \nu^k_x, \cdot\rangle \mathop{\rightharpoonup}\limits^{\ast} \langle \nu_x, \cdot\rangle \qquad \text{ in }L^\infty(\Omega). 
		$$
		Together with the convexity of $f$, this leads to
		$$
		\liminf_{k\to\infty} \int_\Omega f\big(\langle \nu^k_{x}, \cdot \rangle \big) \dx \geq \int_\Omega f\big(\langle \nu_{x}, \cdot \rangle \big) \dx,
		$$
		see e.g.~\cite[Lemma~A.7.3]{Markfelder} for more details. Collecting everything, we find \eqref{eq:V-semi-continuous} as desired.
	\end{proof}
	
	\begin{rem} 
		Note that $\V_f$ is not strictly concave. Indeed take $(\nu^1,\lambda^1,(\nu^\infty)^1),(\nu^2,\lambda^2,(\nu^\infty)^2) \in \setGYM$ with $\nu^1 \neq \nu^2$ but $\langle \nu^1_{x} , \cdot\rangle = \langle \nu^2_{x} , \cdot\rangle$ for a.e.~$x\in \Omega$. Then according to Lemma~\ref{lemma:properties-V}~\ref{item:prop-V-concave}, inequality \eqref{eq:V-concave} holds with equality for $\tau\in (0,1)$.
	\end{rem}

	\subsection{The criterion and its properties} 
	
	Our goal is to define a criterion which selects particular generalized Young measures from a subset $M\subset \setGYM$. We will assume the following four assumptions regarding the function $f$ and the subset $M$.
	
	\begin{enumerate}[label=(A\arabic*)]
		\item \label{item:A-f-strictlyconvex} The function $f$ is \emph{strictly} convex.
		
		\item \label{item:A-M-nonempty} The set $M$ is non-empty, i.e.~$M\neq\emptyset$.
		
		\item \label{item:A-M-convex} The set $M$ is convex, i.e.~for all $\tau \in [0,1]$ and all $\big(\nu^1,\lambda^1,(\nu^\infty)^1\big), \big(\nu^2,\lambda^2,(\nu^\infty)^2\big) \in M$
		the convex combination $(\widehat{\nu},\widehat{\lambda},\widehat{\nu}^\infty)$ as defined in Defn.~\ref{defn:convexity} lies in $M$.
		
		\item \label{item:A-M-compact} The set $M$ is sequentially compact with respect to the weak-$\ast$ topology, i.e.~any sequence $(\nu^k,\lambda^k,(\nu^\infty)^k)_{k\in \N}\subset M$ has a converging subsequence with limit in $M$.
	\end{enumerate} 
	
	Now we are ready to formulate our maximality criterion.
	
	\begin{defn} \label{defn:crit1} 
		Let $M\subset \setGYM$. We call a generalized Young measure $(\nu,\lambda,\nu^\infty)\in M$ \emph{maximal} if $(\nu,\lambda,\nu^\infty)$ is a maximizer of $\V_f$, i.e. if 
		$$
		\V_f[\nu,\lambda,\nu^\infty] = \max_{(\widetilde{\nu},\widetilde{\lambda},\widetilde{\nu}^\infty)\in M} \V_f[\widetilde{\nu},\widetilde{\lambda},\widetilde{\nu}^\infty].
		$$
	\end{defn}
	
	\subsubsection{Existence}
	
	In this subsection we prove existence of a maximal generalized Young measure.
	
	\begin{thm} \label{thm:crit1-existence}
		Under the assumptions~\ref{item:A-M-nonempty} and \ref{item:A-M-compact}, there exists $(\nu,\lambda,\nu^\infty)\in M$ which is maximal in the sense of Defn.~\ref{defn:crit1}.
	\end{thm}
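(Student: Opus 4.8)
The plan is to apply the direct method of the calculus of variations. First I would set
\[
S := \sup_{(\widetilde{\nu},\widetilde{\lambda},\widetilde{\nu}^\infty)\in M} \V_f[\widetilde{\nu},\widetilde{\lambda},\widetilde{\nu}^\infty],
\]
which makes sense (a priori as an element of $[0,+\infty]$) because $M\neq\emptyset$ by assumption~\ref{item:A-M-nonempty} and $\V_f$ is non-negative by Prop.~\ref{prop:bounds-V}. Then I would choose a maximizing sequence $(\nu^k,\lambda^k,(\nu^\infty)^k)_{k\in \N}\subset M$ with $\V_f[\nu^k,\lambda^k,(\nu^\infty)^k]\to S$.

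Next I would invoke the sequential weak-$\ast$ compactness of $M$ guaranteed by assumption~\ref{item:A-M-compact} to extract a subsequence (not relabeled) converging in the weak-$\ast$ topology to some $(\nu,\lambda,\nu^\infty)\in M$. Since passing to a subsequence does not alter the limit of the already convergent sequence of values, one still has $\V_f[\nu^k,\lambda^k,(\nu^\infty)^k]\to S$ along this subsequence, and in particular $\limsup_{k\to\infty}\V_f[\nu^k,\lambda^k,(\nu^\infty)^k]=S$.

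Finally I would apply the upper semi-continuity of $\V_f$ with respect to the weak-$\ast$ topology, i.e.~Lemma~\ref{lemma:properties-V}~\ref{item:prop-V-usc}, to the converging subsequence, obtaining $\V_f[\nu,\lambda,\nu^\infty]\geq\limsup_{k\to\infty}\V_f[\nu^k,\lambda^k,(\nu^\infty)^k]=S$. On the other hand $\V_f[\nu,\lambda,\nu^\infty]\leq S$ simply because $(\nu,\lambda,\nu^\infty)\in M$. Combining the two inequalities yields $\V_f[\nu,\lambda,\nu^\infty]=S$, and by Prop.~\ref{prop:bounds-V} this common value is finite; hence $S<\infty$ and the maximum in Defn.~\ref{defn:crit1} is attained at $(\nu,\lambda,\nu^\infty)$.

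I do not expect a genuine obstacle here: this is the textbook direct method, and all analytic content has already been isolated in Lemma~\ref{lemma:properties-V} (upper semi-continuity) and in assumption~\ref{item:A-M-compact} (compactness). The only point requiring a moment's care is that $S$ might a priori be $+\infty$; this is ruled out a posteriori, since the finiteness in Prop.~\ref{prop:bounds-V} applied to the limit forces $S<\infty$. The substantive work of the paper lies in verifying assumption~\ref{item:A-M-compact} (and the semi-continuity of $\V_f$) in the concrete situations of Sect.~\ref{sec:euler}, not in the present deduction.
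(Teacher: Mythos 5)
Your proposal is correct and follows essentially the same route as the paper: the direct method combining the sequential weak-$\ast$ compactness of $M$ from \ref{item:A-M-compact} with the upper semi-continuity of $\V_f$ from Lemma~\ref{lemma:properties-V}~\ref{item:prop-V-usc}. The only difference is organizational: the paper first rules out $S=+\infty$ by a separate contradiction argument before running the maximizing sequence, whereas you fold the finiteness of $S$ into the same pass via Prop.~\ref{prop:bounds-V} applied to the limit, which is equally valid.
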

	
	The statement of Thm.~\ref{thm:crit1-existence} follows from a standard argument used in optimization theory. Still we present a detailed proof for the sake of completeness. 
	
	\begin{proof} 
		As a first step, we show that $\V_f$ is bounded on $M$. Assume $\V_f$ was not bounded on $M$. Then there exists a sequence $(\nu^k,\lambda^k,(\nu^\infty)^k)_{k\in \N}\subset M$ with 
		$$
		\V_f[\nu^k,\lambda^k,(\nu^\infty)^k] \geq k \qquad \text{ for all }k\in \N.
		$$
		By compactness (see assumption~\ref{item:A-M-compact}), we may assume that $(\nu^k,\lambda^k,(\nu^\infty)^k)_{k\in \N}$ converges to some $(\nu,\lambda,\nu^\infty)\in M$ with respect to the weak-$\ast$ topology. Then upper semicontinuity (see Lemma~\ref{lemma:properties-V}~\ref{item:prop-V-usc}) yields 
		$$
		\V_f[\nu,\lambda,\nu^\infty] \geq \limsup_{k\to\infty} \V_f[\nu^k,\lambda^k,(\nu^\infty)^k] = \infty,
		$$
		which contradicts Prop.~\ref{prop:bounds-V}. Hence $\V_f$ must be bounded on $M$.
		
		Together with the fact that $M\neq \emptyset$ (see assumption~\ref{item:A-M-nonempty}), we infer that 
		$$
		S:= \sup_{(\widetilde{\nu},\widetilde{\lambda},\widetilde{\nu}^\infty)\in M} \V_f[\widetilde{\nu},\widetilde{\lambda},\widetilde{\nu}^\infty]
		$$ 
		is finite. Consequently there exists a sequence $(\nu^k,\lambda^k,(\nu^\infty)^k)_{k\in \N}\subset M$ with 
		$$
		S-\frac{1}{k} \leq \V_f[\nu^k,\lambda^k,(\nu^\infty)^k] \leq S \qquad \text{ for all }k\in \N.
		$$
		Arguing as above, we may assume that $(\nu^k,\lambda^k,(\nu^\infty)^k)_{k\in \N}$ converges to some $(\nu,\lambda,\nu^\infty)\in M$ with respect to the weak-$\ast$ topology. Again upper semicontinuity leads to 
		$$
		\V_f[\nu,\lambda,\nu^\infty] \geq \limsup_{k\to\infty} \V_f[\nu^k,\lambda^k,(\nu^\infty)^k] \geq \limsup_{k\to\infty} \left( S-\frac{1}{k} \right) = S. 
		$$
		Thus, $(\nu,\lambda,\nu^\infty)$ is a desired maximizer. 
	\end{proof}

	\subsubsection{Mean value of maximal measure is unique} 
	
	Maximal measures are not necessarily unique, but the barycenter of their oscillation measure is. This is the content of the following proposition. 
	
	\begin{thm} \label{thm:crit1-uniqueness-meanvalue} 
		Suppose the assumptions~\ref{item:A-f-strictlyconvex} and \ref{item:A-M-convex} hold. Let furthermore 
		$$
		(\nu^1,\lambda^1,(\nu^\infty)^1) , (\nu^2,\lambda^2,(\nu^\infty)^2) \in M
		$$ 
		two generalized Young measures which are both maximal in the sense of Defn.~\ref{defn:crit1}. 
		Then 
		\begin{align}
			\langle \nu^1_{x}, \cdot \rangle &= \langle \nu^2_{x}, \cdot \rangle \qquad \text{ for a.e. } x\in \Omega, \text{ and } \label{eq:203} \\
			\int_\Omega \langle \nu^1_{x}, f \rangle \dx + \int_{\closure{\Omega}} \langle (\nu^\infty)_x^1 , f^\infty \rangle \dlambda^1(x) &= \int_\Omega \langle \nu^2_{x}, f \rangle \dx + \int_{\closure{\Omega}} \langle (\nu^\infty)_x^2 , f^\infty \rangle \dlambda^2(x). \label{eq:204}
		\end{align}
	\end{thm}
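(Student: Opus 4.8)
The plan is to run the classical ``strict concavity forces uniqueness of the barycenters'' argument: use the convexity of $M$ to produce a competitor, and the concavity of $\V_f$ together with maximality to pin down where equality must occur. Write $S := \V_f[\nu^1,\lambda^1,(\nu^\infty)^1] = \V_f[\nu^2,\lambda^2,(\nu^\infty)^2]$ for the common (maximal) value. First I would form the convex combination $(\widehat{\nu},\widehat{\lambda},\widehat{\nu}^\infty)$ of the two given maximizers corresponding to $\tau=\half$, as in Defn.~\ref{defn:convexity}. By assumption~\ref{item:A-M-convex} we have $(\widehat{\nu},\widehat{\lambda},\widehat{\nu}^\infty)\in M$, and concavity (Lemma~\ref{lemma:properties-V}~\ref{item:prop-V-concave}) gives
\[
\V_f[\widehat{\nu},\widehat{\lambda},\widehat{\nu}^\infty] \;\geq\; \half\,\V_f[\nu^1,\lambda^1,(\nu^\infty)^1] + \half\,\V_f[\nu^2,\lambda^2,(\nu^\infty)^2] \;=\; S.
\]
Since $S$ is the maximum of $\V_f$ over $M$ and $(\widehat{\nu},\widehat{\lambda},\widehat{\nu}^\infty)$ lies in $M$, this inequality must in fact be an equality.

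The next step is to feed this into the equality case of Lemma~\ref{lemma:properties-V}~\ref{item:prop-V-concave}. Because $f$ is strictly convex by assumption~\ref{item:A-f-strictlyconvex} and because $\tau=\half$ lies in the \emph{open} interval $(0,1)$, equality in \eqref{eq:V-concave} forces $\langle \nu^1_x, \cdot\rangle = \langle \nu^2_x, \cdot\rangle$ for a.e.~$x\in\Omega$, which is precisely \eqref{eq:203}.

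For \eqref{eq:204} I would simply unwind Defn.~\ref{defn:V}. Writing out the identity $\V_f[\nu^1,\lambda^1,(\nu^\infty)^1] = \V_f[\nu^2,\lambda^2,(\nu^\infty)^2]$ and using \eqref{eq:203} to see that $f\big(\langle\nu^1_x,\cdot\rangle\big) = f\big(\langle\nu^2_x,\cdot\rangle\big)$ for a.e.~$x$, the negative terms $\int_\Omega f\big(\langle\nu^i_x,\cdot\rangle\big)\dx$, $i=1,2$, coincide and hence cancel; what remains is exactly \eqref{eq:204}. To cancel them rigorously one first notes that $x\mapsto f\big(\langle\nu^i_x,\cdot\rangle\big)$ is integrable, since $0 \leq f\big(\langle\nu^i_x,\cdot\rangle\big) \leq \langle\nu^i_x, f\rangle$ by Jensen's inequality and the right-hand side is integrable exactly as in the proof of Prop.~\ref{prop:bounds-V}.

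I do not expect a genuine obstacle in this argument; it is short and structural, being the standard consequence of having a concave functional that is ``strictly concave in the means''. The only places deserving attention are the verification that $\tau=\half\in(0,1)$, so that the strict-convexity equality characterization of Lemma~\ref{lemma:properties-V}~\ref{item:prop-V-concave} is applicable, and the integrability check just mentioned, which is needed before one is allowed to cancel the Jensen-defect terms in the last step.
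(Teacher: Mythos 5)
Your proposal is correct and follows essentially the same route as the paper's own proof: form the convex combination (the paper uses a general $\tau\in(0,1)$, you fix $\tau=\half$, which changes nothing), invoke \ref{item:A-M-convex} and the equality case of Lemma~\ref{lemma:properties-V}~\ref{item:prop-V-concave} to get \eqref{eq:203}, and then cancel the Jensen-defect terms in $\V_f[\nu^1,\lambda^1,(\nu^\infty)^1]=\V_f[\nu^2,\lambda^2,(\nu^\infty)^2]$ to obtain \eqref{eq:204}. Your explicit integrability check before cancelling those terms is a small addition the paper leaves implicit, but the argument is the same.
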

	
	\begin{proof} 
		For $\tau\in (0,1)$, consider the convex combination $(\widehat{\nu},\widehat{\lambda},\widehat{\nu}^\infty)\in \setGYM$ of $\big(\nu^1,\lambda^1,(\nu^\infty)^1\big)$, $\big(\nu^2,\lambda^2,(\nu^\infty)^2\big)$ as defined in Defn.~\ref{defn:convexity}. According to assumption~\ref{item:A-M-convex} it holds that $(\widehat{\nu},\widehat{\lambda},\widehat{\nu}^\infty)\in M$. Moreover, we deduce from assumption~\ref{item:A-f-strictlyconvex} and Lemma~\ref{lemma:properties-V}~\ref{item:prop-V-concave} that 
		$$
		\V_f[\widehat{\nu},\widehat{\lambda},\widehat{\nu}^\infty] \geq \tau \V_f[\nu^1,\lambda^1,(\nu^\infty)^1] + (1-\tau) \V_f[\nu^2,\lambda^2,(\nu^\infty)^2]
		$$
		with equality if and only if $\langle \nu^1_{x} , \cdot\rangle = \langle \nu^2_{x} , \cdot\rangle$ for a.e.~$x\in \Omega$. Since 
		$$
		\V_f[\nu^1,\lambda^1,(\nu^\infty)^1]= \V_f[\nu^2,\lambda^2,(\nu^\infty)^2] = \max_{(\widetilde{\nu},\widetilde{\lambda},\widetilde{\nu}^\infty)\in M} \V_f[\widetilde{\nu},\widetilde{\lambda},\widetilde{\nu}^\infty],
		$$
		we infer 
		$$
		\V_f[\widehat{\nu},\widehat{\lambda},\widehat{\nu}^\infty]= \V_f[\nu^1,\lambda^1,(\nu^\infty)^1]= \V_f[\nu^2,\lambda^2,(\nu^\infty)^2],
		$$
		and consequently $\langle \nu^1_{x} , \cdot\rangle = \langle \nu^2_{x} , \cdot\rangle$ for a.e.~$x\in \Omega$. 
		
		The latter yields, together with $\V_f[\nu^1,\lambda^1,(\nu^\infty)^1]= \V_f[\nu^2,\lambda^2,(\nu^\infty)^2]$, that even \eqref{eq:204} holds.
	\end{proof}
	
	\begin{rem}
		Thm.~\ref{thm:crit1-uniqueness-meanvalue} not only states that the barycenter of the oscillation measure of maximal generalized Young measures is unique (see \eqref{eq:203}), but also that the expression 
		\begin{equation} \label{eq:210}
			\int_\Omega \langle \nu_{x}, f \rangle \dx + \int_{\closure{\Omega}} \langle \nu^\infty_x , f^\infty \rangle \dlambda(x)
		\end{equation}
		is unique (see \eqref{eq:204}). In the context of the Euler equations, see Sect.~\ref{sec:euler} below, we will choose $f$ to be the energy, and hence \eqref{eq:210} will represent the total energy (including the ``concentration energy'').
	\end{rem}

	\section{Application to measure-valued solutions of the Euler equations} \label{sec:euler} 
	
	\subsection{Incompressible Euler equations} \label{subsec:incomp-euler} 
	
	The incompressible Euler equations read
	\begin{align}
		\Div v &= 0 , \label{eq:euler-mass} \\
		\partial_t v + \Div (v \otimes v) + \Grad p &= 0, \label{eq:euler-mom}
	\end{align}
	with unknown velocity $v:[0,T) \times \T^d \to \R^d$, where $d\in \N$ is the space dimension, $\T^d$ denotes the $d$-dimensional torus and $T\in(0,\infty)$. The energy density for the incompressible Euler system \eqref{eq:euler-mass}, \eqref{eq:euler-mom} is simply given by the kinetic energy $\half|v|^2$. Note furthermore that the pressure $p$ can be recovered from $v$ via the Poisson equation 
	$$
	\Delta p = - \Div \Div (v\otimes v).
	$$
	
	The notion of measure-valued solutions to the incompressible Euler equations \eqref{eq:euler-mass}, \eqref{eq:euler-mom} goes back to \name{DiPerna}-\name{Majda}~\cite{DipMaj87} (which is in turn built upon \name{DiPerna}~\cite{DiPerna85}). In the \name{Alibert}-\name{Bouchitt{\'e}}~\cite{AliBou97} framework, measure-valued solutions to \eqref{eq:euler-mass}, \eqref{eq:euler-mom} are defined as follows, see \cite[Defn.~1]{BreDelSze11}, \cite[Defn.~8]{SzeWie12} or \cite[Defn.~3.7]{WiedemannPHD}.
	
	\begin{defn}[See e.g.~{\cite[Defn.~8~a]{SzeWie12}}] \label{defn:euler-mvs}
		A generalized Young measure $(\nu,\lambda,\nu^\infty)\in \setGYM$ on $\R^d$ with parameters in $(0,T)\times \T^d$ (i.e.~$\Omega=(0,T)\times \T^d$, $n=d+1$, $m=d$ in Defn.~\ref{defn:GYM}) is called \emph{measure-valued solution} of the incompressible Euler equations \eqref{eq:euler-mass}, \eqref{eq:euler-mom} with initial data $v_0\in L^2(\T^d)$ if\footnote{In the context of the incompressible Euler equations \eqref{eq:euler-mass}, \eqref{eq:euler-mom}, we use $v\in \R^d$ and $\theta\in \sphere{d-1}$ as dummy variables when integrating with respect to $\nu_{t,x}$ and $\nu^\infty_{t,x}$, respectively.}
		\begin{align} 
			&\int_0^T \int_{\T^d} \langle \nu_{t,x} , v\rangle \cdot \Grad \psi \dx\dt = 0, \label{eq:euler-mvs-mass} \\
			&\int_0^T \int_{\T^d} \Big[ \langle \nu_{t,x}, v\rangle \cdot \partial_t \phi + \langle \nu_{t,x} , v\otimes v \rangle : \Grad \phi \Big] \dx\dt \notag\\
			&\quad + \int_0^T \int_{\T^d} \langle \nu^\infty_{t,x} , \theta\otimes \theta \rangle : \Grad \phi \dlambda(t,x) + \int_{\T^d} v_0 \cdot \phi(0,\cdot)\dx = 0 \label{eq:euler-mvs-mom}
		\end{align}
		for all test functions $\psi\in \Cc([0,T)\times \T^d)$, $\phi\in \Cc([0,T)\times \T^d;\R^d)$ with $\Div \phi = 0$. 
	\end{defn}
	
	\begin{defn}[See e.g.~{\cite[Defn.~8~b]{SzeWie12}}] \label{defn:euler-adm-mvs}
		A measure-valued solution of the incompressible Euler equations \eqref{eq:euler-mass}, \eqref{eq:euler-mom} $(\nu,\lambda,\nu^\infty)\in \setGYM$ with initial data $v_0\in L^2(\T^d)$ is called \emph{admissible} if the following assertions hold:
		\begin{itemize}
			\item The concentration measure $\lambda$ admits a disintegration of the form $\dlambda(t,x) = \dlambda_t(x) \otimes \dt$, where $t\mapsto \lambda_t$ is a bounded (with respect to the total variation norm) measurable map from $[0,T]$ into $\meas^+(\T^d)$. 
			
			\item The energy is bounded by the initial energy, i.e. 
			\begin{equation} \label{eq:euler-mvs-energy} 
				\int_{\T^d} \half \langle \nu_{t,x} , |v|^2 \rangle \dx + \half \lambda_t(\T^d) \leq \int_{\T^d} \half |v_0(x)|^2 \dx\qquad \text{ for a.e. } t\in (0,T).
			\end{equation}
		\end{itemize}
	\end{defn}
	
	\begin{rem} \label{rem:EnIneq->GYMbound}
		Note that \eqref{eq:euler-mvs-energy} makes \eqref{eq:GYM-bound} redundant. For this reason, some authors do not explicitly require \eqref{eq:GYM-bound} when they introduce generalized Young measures in the context of measure-valued solutions to the Euler equations. 
	\end{rem} 
	
	Now we fix $f\in \mathcal{F}_2(\R^d)$, 
	\begin{equation}\label{eq:f-euler}
		f(v)=\half|v|^2,
	\end{equation} 
	which coincides (up to a factor $\half$) with the choice in Rem.~\ref{rem:example-variance}, but also (and more importantly) with the energy for the incompressible Euler equations \eqref{eq:euler-mass}, \eqref{eq:euler-mom}. Consequently, the functional $\V_f$ defined in Defn.~\ref{defn:V} reads 
	$$ 
	\V_f[\nu,\lambda,\nu^\infty] = \int_0^T \int_{\T^d} \half \Big[\langle \nu_{t,x}, |v|^2 \rangle - \big|\langle \nu_{t,x}, v \rangle \big|^2 \Big] \dx\dt + \half \lambda([0,T]\times \T^d).
	$$ 
	
	\begin{rem} \label{rem:incompr-energy} 
		The reader should notice that for the choice $f(v)=\half|v|^2$ we have $f^\infty\equiv \half$. Hence the term 
		$$
		\iint_{[0,T]\times\T^d} \langle \nu^\infty_{t,x}, f^\infty \rangle \dlambda(t,x)
		$$ 
		indeed simplifies to $\half \lambda([0,T]\times \T^d)$. The reason for the term $\half \lambda_t(\T^d)$ in the energy inequality \eqref{eq:euler-mvs-energy} is of the same spirit.
	\end{rem}  
	
	Now let $v_0\in L^2(\T^d)$ arbitrary. We set 
	\begin{align}
		M:= \Big\{ (\nu,\lambda,\nu^\infty)\in \setGYM \,\Big|\,(\nu,\lambda,\nu^\infty)&\text{ is an admissible measure-valued solution } \label{eq:M-euler} \\
		&\text{of \eqref{eq:euler-mass}, \eqref{eq:euler-mom} with initial data }v_0 \Big\}. \notag
	\end{align}
	
	\begin{prop} \label{prop:M-euler-suitable} 
		For the function $f$ chosen in \eqref{eq:f-euler} and the set $M\subset \setGYM$ defined in \eqref{eq:M-euler} the assumptions~\ref{item:A-f-strictlyconvex}-\ref{item:A-M-compact} hold.
	\end{prop}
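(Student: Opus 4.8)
The plan is to verify the four assumptions~\ref{item:A-f-strictlyconvex}--\ref{item:A-M-compact} one by one; only the compactness~\ref{item:A-M-compact} requires real work. Assumption~\ref{item:A-f-strictlyconvex} is immediate: for $v\neq w$ and $\tau\in(0,1)$ the elementary identity $\tau f(v)+(1-\tau)f(w)-f(\tau v+(1-\tau)w)=\half\tau(1-\tau)|v-w|^2>0$ (the same one appearing in the toy example of Sect.~\ref{sec:intro}) shows strict convexity, and recall that $f=\half|\cdot|^2$ lies in $\mathcal{F}_2(\R^d)$ with $f^\infty\equiv\half$ (see Rem.~\ref{rem:incompr-energy}), so $\V_f$ is well defined. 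For~\ref{item:A-M-nonempty} I would invoke the classical existence of admissible measure-valued solutions for arbitrary $v_0\in L^2(\T^d)$: a vanishing-viscosity sequence of Leray--Hopf solutions of the Navier--Stokes equations is bounded in $L^\infty(0,T;L^2(\T^d))$ by the Leray energy inequality, hence generates a generalized Young measure that solves \eqref{eq:euler-mvs-mass}--\eqref{eq:euler-mvs-mom} and inherits the energy bound \eqref{eq:euler-mvs-energy}; see \cite{DipMaj87,BreDelSze11,SzeWie12}.

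For~\ref{item:A-M-convex}, take $\big(\nu^1,\lambda^1,(\nu^\infty)^1\big),\big(\nu^2,\lambda^2,(\nu^\infty)^2\big)\in M$ and $\tau\in[0,1]$, and let $(\widehat\nu,\widehat\lambda,\widehat\nu^\infty)$ be the convex combination of Defn.~\ref{defn:convexity}. The key point is that $v\mapsto v_i$ and $v\mapsto v_iv_j$ both belong to $\mathcal{F}_2(\R^d)$, with $2$-recession functions $0$ and $\theta\mapsto\theta_i\theta_j$ respectively; evaluating the defining identity \eqref{eq:defn-convexity} at these functions shows that the left-hand sides of \eqref{eq:euler-mvs-mass} and \eqref{eq:euler-mvs-mom} (which are affine in the pairings occurring in \eqref{eq:defn-convexity}, the $v_0$-term being independent of the measure) are the corresponding convex combinations, so both equations survive. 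Admissibility is preserved as well: by Lemma~\ref{lemma:convexity}, $\widehat\lambda=\tau\lambda^1+(1-\tau)\lambda^2$, so the time-disintegrations add up to $\d\widehat\lambda=\d\widehat\lambda_t\otimes\dt$ with $\widehat\lambda_t=\tau\lambda^1_t+(1-\tau)\lambda^2_t$ bounded and measurable; and, using $\widehat\nu_{t,x}=\tau\nu^1_{t,x}+(1-\tau)\nu^2_{t,x}$ from the same lemma, \eqref{eq:euler-mvs-energy} for $(\widehat\nu,\widehat\lambda,\widehat\nu^\infty)$ is just the convex combination of \eqref{eq:euler-mvs-energy} for the two solutions. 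Hence $(\widehat\nu,\widehat\lambda,\widehat\nu^\infty)\in M$.

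The bulk of the proof is~\ref{item:A-M-compact}. Given a sequence $(\nu^k,\lambda^k,(\nu^\infty)^k)_{k\in\N}\subset M$, I would first integrate \eqref{eq:euler-mvs-energy} over $t\in(0,T)$ and use $\int_0^T\lambda^k_t(\T^d)\dt=\lambda^k([0,T]\times\T^d)$ to obtain the uniform bound $\int_\Omega\langle\nu^k_x,|\cdot|^2\rangle\dx+\lambda^k(\closure\Omega)\le 2TE_0$, where $E_0:=\int_{\T^d}\half|v_0|^2\dx$; this is a uniform version of \eqref{eq:GYM-bound}. By the compactness (``fundamental'') theorem for generalized Young measures (see \cite{AliBou97}, \cite[Chap.~12]{Rindler}, \cite[Sect.~2.2]{SzeWie12}) a subsequence converges, in the sense of \eqref{eq:convergence}, to some $(\nu,\lambda,\nu^\infty)\in\setGYM$. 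Passing \eqref{eq:euler-mvs-mass} and \eqref{eq:euler-mvs-mom} to the limit is then routine: their integrands pair $\nu^k$ and $(\nu^\infty)^k$ with $v_i$ and $v_iv_j$, which lie in $\mathcal{F}_2(\R^d)$, against the fixed, compactly supported test functions of Defn.~\ref{defn:euler-mvs} (these and their derivatives extend continuously to $\closure\Omega$ since they vanish near $t=T$), so \eqref{eq:convergence} applies term by term while the $v_0$-term is untouched; thus the limit is a measure-valued solution with data $v_0$.

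The main obstacle --- where a naive limit argument fails --- is showing that the limit is \emph{admissible}, in particular that $\lambda$ still disintegrates in time as $\dlambda_t\otimes\dt$, since weak-$\ast$ limits of generalized Young measures may turn oscillation into concentration and thereby destroy the product structure present in each $\lambda^k$. The remedy is to exploit the \emph{pointwise-in-time} energy bound. Testing \eqref{eq:convergence} with $\varphi(t,x)=\phi(t)$ for $0\le\phi\in C([0,T])$ and with the probe $f=|\cdot|^2$ (so $f^\infty\equiv1$, hence $\langle\nu^\infty_{t,x},f^\infty\rangle=1$), the limiting identity reads
\begin{align*}
&\int_0^T\int_{\T^d}\phi(t)\langle\nu_{t,x},|v|^2\rangle\dx\dt+\int_{[0,T]\times\T^d}\phi(t)\dlambda(t,x)\\
&\qquad=\lim_{k\to\infty}\int_0^T\phi(t)\Big(\int_{\T^d}\langle\nu^k_{t,x},|v|^2\rangle\dx+\lambda^k_t(\T^d)\Big)\dt\\
&\qquad\le 2E_0\int_0^T\phi(t)\dt,
\end{align*}
the last inequality being \eqref{eq:euler-mvs-energy}. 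Dropping the non-negative oscillation term on the left gives $\int_{[0,T]\times\T^d}\phi(t)\dlambda(t,x)\le 2E_0\int_0^T\phi\dt$, i.e.~the pushforward of $\lambda$ under $(t,x)\mapsto t$ is absolutely continuous with $L^\infty$ density $\le 2E_0$; the disintegration theorem then yields $\dlambda=\dlambda_t\otimes\dt$ with $t\mapsto\lambda_t$ measurable and $\lambda_t(\T^d)\le 2E_0$. Keeping the oscillation term in the displayed inequality and letting $\phi$ range over non-negative continuous functions finally gives $\int_{\T^d}\langle\nu_{t,x},|v|^2\rangle\dx+\lambda_t(\T^d)\le 2E_0$ for a.e.~$t$, which (dividing by $2$) is precisely \eqref{eq:euler-mvs-energy} for the limit. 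Therefore $(\nu,\lambda,\nu^\infty)\in M$, and~\ref{item:A-M-compact} follows.
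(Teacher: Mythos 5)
Your proposal is correct and follows essentially the same route as the paper: (A1)--(A3) are verified exactly as in the paper's proof, and for (A4) both arguments reduce sequential compactness to the uniform bounds on $\big(\lambda^k([0,T]\times\T^d)\big)_k$ and $\big(\int_0^T\int_{\T^d}\langle\nu^k_{t,x},|v|^2\rangle\dx\dt\big)_k$ coming from \eqref{eq:euler-mvs-energy}, combined with the fundamental compactness theorem for generalized Young measures. The one substantive difference is that the paper simply declares the closedness of $M$ under weak-$\ast$ limits to be ``obvious,'' whereas you actually prove it: passing to the limit in \eqref{eq:euler-mvs-mass}--\eqref{eq:euler-mvs-mom} by testing \eqref{eq:convergence} with $v\mapsto v_i$ and $v\mapsto v_i v_j$, and --- the genuinely non-obvious part --- recovering the time-disintegration of the limit $\lambda$ and the pointwise-in-time energy inequality by testing with functions $\phi=\phi(t)$ and $f=|\cdot|^2$, so that the pushforward of $\lambda$ onto $[0,T]$ is dominated by $2E_0\dt$. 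That argument is correct and is a worthwhile supplement to the paper's terser treatment.
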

	
	\begin{proof} 
		\begin{enumerate}
			\item[(A1)] Obviously, $f$ defined in \eqref{eq:f-euler} is strictly convex. 
			
			\item[(A2)] Existence of admissible measure-valued solutions was shown already in \cite{DipMaj87}. We also refer to \name{Brenier}-\name{De~Lellis}-\name{Sz{\'e}kelyhidi}~\cite[Prop.~1]{BreDelSze11} and \cite[Thm.~3.9]{WiedemannPHD} who (in contrast to \name{DiPerna}-\name{Majda}~\cite{DipMaj87}) use the same notation as we do. Consequently, $M\neq \emptyset$. 
			
			\item[(A3)] Let $(\nu^1,\lambda^1,(\nu^\infty)^1),(\nu^2,\lambda^2,(\nu^\infty)^2)\in M$ two admissible measure-valued solutions. It is straightforward to see that their convex combination $(\widehat{\nu},\widehat{\lambda},\widehat{\nu}^\infty)$ as defined in Defn.~\ref{defn:convexity} is a measure-valued solution in the sense of Defn.~\ref{defn:euler-mvs}. Moreover we have 
			\begin{align*}
				\dlambdahat(t,x)&= \tau \dlambda^1(t,x) + (1-\tau) \dlambda^2(t,x) \\
				&= \tau \dlambda^1_t(x) \otimes \dt + (1-\tau) \dlambda^2_t(x) \otimes \dt \\
				&= \Big( \tau \dlambda^1_t(x) + (1-\tau) \dlambda^2_t(x)\Big) \otimes \dt,
			\end{align*}
			i.e.~there exists a desired disintegration of $\widehat{\lambda}$ where $\dlambdahat_t(x) := \tau \dlambda^1_t(x) + (1-\tau) \dlambda^2_t(x)$. The energy balance \eqref{eq:euler-mvs-energy} for the convex combination $(\widehat{\nu},\widehat{\lambda},\widehat{\nu}^\infty)$ is then obvious, which shows that the latter is even admissible. 
			
			\item[(A4)] It is obvious that $M$ is closed with respect to the weak-$\ast$ topology of $\setGYM$. Hence it suffices to show that any sequence $(\nu^k,\lambda^k,(\nu^\infty)^k)_{k\in \N}\subset M$ has a converging subsequence with limit in\footnote{The fact that even this limit lies in $M$ follows from the closedness.} $\setGYM$. Therefore, according to \cite[Cor.~12.3]{Rindler}, it suffices to show that the sequences 
			$$
			\Big(\lambda^k([0,T]\times \T^d)\Big)_{k\in \N}\subset \R\qquad \text{ and }\qquad\left(\int_0^T\int_{\T^d} \langle\nu_{t,x}^k, |\cdot|^2\rangle \dx\dt\right)_{k\in \N}\subset \R
			$$
			are uniformly bounded\footnote{In \cite{Rindler} $p$ is equal to $1$ (see also Rem.~\ref{rem:p}) and consequently \cite[Cor.~12.3]{Rindler} requires boundedness of the sequence $\left(\int_0^T\int_{\T^d} \langle\nu_{t,x}^k, |\cdot|\rangle \dx\dt\right)_{k\in \N}$ instead of $\left(\int_0^T\int_{\T^d} \langle\nu_{t,x}^k, |\cdot|^2\rangle \dx\dt\right)_{k\in \N}$.}. Both bounds hold due to \eqref{eq:euler-mvs-energy}, more precisely
			\begin{align*}
				\lambda^k([0,T]\times\T^d) = \int_0^T (\lambda^k)_t(\T^d) \dt \leq T \|v_0\|_{L^2}^2\quad \text{ and }\quad 
				\int_0^T\int_{\T^d} \langle\nu_{t,x}^k, |v|^2\rangle \dx\dt\leq T \|v_0\|_{L^2}^2
			\end{align*}
			for any $k\in\N$. 
		\end{enumerate}
	\end{proof} 
	
	Combining Thms.~\ref{thm:crit1-existence}, \ref{thm:crit1-uniqueness-meanvalue} with Prop.~\ref{prop:M-euler-suitable} we find the following.
	
	\begin{cor} 
		There exists an admissible measure-valued solution of the incompressible Euler equations \eqref{eq:euler-mass}, \eqref{eq:euler-mom} $(\nu,\lambda,\nu^\infty)\in M$ with initial data $v_0\in L^2(\T^d)$ which is maximal in the sense of Defn.~\ref{defn:crit1} with $f(v)=\half|v|^2$. 
		
		Moreover, any two such maxima $(\nu^1,\lambda^1,(\nu^\infty)^1),(\nu^2,\lambda^2,(\nu^\infty)^2) \in M$ satisfy 
		\begin{align*}
			\langle \nu^1_{t,x}, v \rangle &= \langle \nu^2_{t,x}, v \rangle \qquad \text{ for a.e. } (t,x)\in (0,T)\times\T^d, \text{ and } \\
			\int_0^T \int_{\T^d} \half \langle \nu^1_{t,x}, |v|^2 \rangle \dx \dt + \half \lambda^1([0,T]\times\T^d) &= \int_0^T \int_{\T^d} \half \langle \nu^2_{t,x}, |v|^2 \rangle \dx \dt + \half \lambda^2([0,T]\times\T^d). 
		\end{align*}
	\end{cor}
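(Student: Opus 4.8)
The plan is simply to feed the concrete data of the incompressible Euler setting into the abstract machinery of Section~\ref{sec:general}. The key observation is that Prop.~\ref{prop:M-euler-suitable} already certifies that the choice $f(v)=\half|v|^2$ from \eqref{eq:f-euler} together with the set $M$ from \eqref{eq:M-euler} satisfies all four assumptions \ref{item:A-f-strictlyconvex}--\ref{item:A-M-compact}. With this in hand, the corollary is a two-line consequence of the two main theorems, and the only real task is to unwind the abstract notation into the time-space form used in Defns.~\ref{defn:euler-mvs}--\ref{defn:euler-adm-mvs}.

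First I would apply Thm.~\ref{thm:crit1-existence}. By Prop.~\ref{prop:M-euler-suitable} the hypotheses \ref{item:A-M-nonempty} (non-emptiness of $M$) and \ref{item:A-M-compact} (sequential weak-$\ast$ compactness of $M$) hold, so there exists $(\nu,\lambda,\nu^\infty)\in M$ maximizing $\V_f$ over $M$; in particular the supremum in Defn.~\ref{defn:crit1} is attained and ``maximal'' makes sense. Since $M$ consists precisely of the admissible measure-valued solutions of \eqref{eq:euler-mass},~\eqref{eq:euler-mom} with the prescribed initial data $v_0\in L^2(\T^d)$, this already yields the first assertion of the corollary.

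Next I would apply Thm.~\ref{thm:crit1-uniqueness-meanvalue}. Again by Prop.~\ref{prop:M-euler-suitable}, assumptions \ref{item:A-f-strictlyconvex} (strict convexity of $f$) and \ref{item:A-M-convex} (convexity of $M$) hold. Hence, if $(\nu^1,\lambda^1,(\nu^\infty)^1)$ and $(\nu^2,\lambda^2,(\nu^\infty)^2)$ are any two maximal elements of $M$, then \eqref{eq:203} gives $\langle\nu^1_{t,x},\cdot\rangle=\langle\nu^2_{t,x},\cdot\rangle$ for a.e.\ $(t,x)\in(0,T)\times\T^d$ (here $\Omega=(0,T)\times\T^d$ and the dummy variable is $v\in\R^d$, so this reads $\langle\nu^1_{t,x},v\rangle=\langle\nu^2_{t,x},v\rangle$), and \eqref{eq:204} gives equality of the expression \eqref{eq:210} for the two solutions. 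It remains to identify \eqref{eq:210} in the present situation: since $f(v)=\half|v|^2$ has $2$-recession function $f^\infty\equiv\half$ (as recorded in Rem.~\ref{rem:incompr-energy}), the concentration term $\int_{\closure{\Omega}}\langle(\nu^\infty)^i_{t,x},f^\infty\rangle\dlambda^i(t,x)$ collapses to $\half\lambda^i([0,T]\times\T^d)$, and the oscillation term equals $\int_0^T\int_{\T^d}\half\langle\nu^i_{t,x},|v|^2\rangle\dx\dt$; so \eqref{eq:204} becomes exactly the claimed equality of total energies.

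There is essentially no obstacle here beyond bookkeeping: all the analytic content — the existence via upper semicontinuity plus compactness, and the mean-value uniqueness via strict concavity of $\V_f$ in the barycenter — is already contained in Thms.~\ref{thm:crit1-existence} and \ref{thm:crit1-uniqueness-meanvalue}, while the PDE-specific verification of \ref{item:A-f-strictlyconvex}--\ref{item:A-M-compact} is Prop.~\ref{prop:M-euler-suitable}. The one point demanding a modicum of care is the correct interpretation of the recession-function contribution to \eqref{eq:210}, which is what turns the abstract quantity into the physical ``total energy including concentration energy''; this is handled by the $f^\infty\equiv\half$ computation above.
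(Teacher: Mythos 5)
Your proposal is correct and follows exactly the paper's route: the corollary is stated there as an immediate combination of Thms.~\ref{thm:crit1-existence} and \ref{thm:crit1-uniqueness-meanvalue} with Prop.~\ref{prop:M-euler-suitable}, and your unwinding of \eqref{eq:210} via $f^\infty\equiv\half$ matches Rem.~\ref{rem:incompr-energy}. Nothing is missing.
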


	\subsection{Compressible Euler equations} \label{subsec:compr-euler} 
	
	As a second example, we consider the isentropic compressible Euler equations
	\begin{align}
		\partial_t \rho + \Div (\rho u) &=0, \label{eq:compr-euler-mass} \\
		\partial_t (\rho u) + \Div( \rho u \otimes u) + \Grad p(\rho) &= 0. \label{eq:compr-euler-mom}
	\end{align}
	Here the unknowns are\footnote{Like in Sect.~\ref{subsec:incomp-euler}, $d\in\N$ denotes the space dimension, $\T^d$ is the $d$-dimensional torus and $T\in (0,\infty)$.} the density $\rho:[0,T)\times \T^d\to \R^+$ and the velocity $u:[0,T)\times \T^d \to \R^d$. Moreover, the pressure $p$ is given by the power law $p(\rho)= \rho^\gamma$ with some $\gamma>1$. The energy density for the isentropic compressible Euler system \eqref{eq:compr-euler-mass}, \eqref{eq:compr-euler-mom} reads
	$$
	\half \rho |u|^2 + \tfrac{1}{\gamma-1} \rho^\gamma.
	$$
	
	\begin{rem}
		Like in \cite{GwiWie15} we exclude vacuum in this paper, i.e.~we only consider solutions with strictly positive density $\rho>0$. A notion of measure-valued solutions to \eqref{eq:compr-euler-mass}, \eqref{eq:compr-euler-mom} which allows for vacuum states (i.e.~$\rho=0$) is available in the literature, see e.g.~\name{Breit}-\name{Feireisl}-\name{Hofmanov{\'a}}~\cite[Defn.~2.1]{BreFeiHof20_1}. 
	\end{rem}
	
	Let us next recall the definition of a measure-valued solution in the context of the compressible Euler equations \eqref{eq:compr-euler-mass}, \eqref{eq:compr-euler-mom}. Such a notion was first introduced by \name{Neustupa}~\cite{Neustupa93}. In order to properly define measure-values solutions of \eqref{eq:compr-euler-mass}, \eqref{eq:compr-euler-mom} in the \name{Alibert}-\name{Bouchitt{\'e}}~\cite{AliBou97} framework, the preliminaries explained in Sect.~\ref{subsec:young-measures} must be slightly modified. We only sketch these refinements here. For more details we refer to \name{Gwiazda}-\name{{\'S}wierczewska-Gwiazda}-\name{Wiedemann}~\cite[Sect.~3]{GwiWie15}. 
	
	Let 
	$$
	\sphere{+}_{\gamma,2} := \left\{ (\beta_1,\beta')\in \R^{1+d} \,\Big|\, |\beta_1|^{2\gamma} + |\beta'|^{4} = 1, \ \beta_1\geq 0 \right\}. 
	$$
	Similar to \cite{GwiWie15}, we use $(\alpha_1,\alpha')\in\R^+\times\R^{d}$ and $(\beta_1,\beta')\in \sphere{+}_{\gamma,2}$ as dummy variables when integrating with respect to $\nu_{t,x}$ and $\nu_{t,x}^\infty$, respectively. One may think of $\alpha_1,\beta_1$ representing $\rho$ and $\alpha',\beta'$ representing $\sqrt{\rho}u$. In this context 
	$$
	\mathcal{A}(\R^+\times\R^{d}) := \left\{ f_0 \in \Cb(\R^+\times\R^{d})\,\Big| \, \lim_{s\to \infty} f_0(s^2\beta_1, s^\gamma \beta') \text{ exists and is continuous in } (\beta_1,\beta')\in \sphere{+}_{\gamma,2}\right\} 
	$$ 
	and 
	\begin{align*}
		\mathcal{F}_{\gamma,2}(\R^+\times\R^{d}) := \Big\{ f \in C(\R^+\times\R^{d})\,\Big| \, \exists &f_0\in \mathcal{A}(\R^+\times\R^{d}) \text{ s.t.~} \\
		&f(\alpha_1,\alpha')=\big(1+(|\alpha_1|^{2\gamma} + |\alpha'|^4)^{1/2}\big) f_0(\alpha_1,\alpha') \Big\} 
	\end{align*} 
	play the role of $\mathcal{A}(\R^m)$ and $\mathcal{F}_2(\R^m)$, respectively. Moreover, the recession function of $f\in \mathcal{F}_{\gamma,2}(\R^+\times\R^{d})$ is given by 
	$$
	f^\infty : \sphere{+}_{\gamma,2} \to \R, \quad f^\infty(\beta_1,\beta'):= \lim_{s\to \infty} f_0(s^2 \beta_1, s^\gamma \beta') = \lim_{s\to \infty} \frac{f(s^2 \beta_1, s^\gamma \beta')}{1+s^{2\gamma}} .
	$$ 
	
	\begin{rem}
		In the context of the compressible Euler equations \eqref{eq:compr-euler-mass}, \eqref{eq:compr-euler-mom} it is also possible to work with variables $\rho$ and $m=\rho u$ (instead of $\rho$ and $\sqrt{\rho} u$), see e.g.~\name{Breit}-\name{Feireisl}-\name{Hofmanov{\'a}}~\cite[Defn.~2.1]{BreFeiHof20_1}.
	\end{rem}
	
	In the compressible setting, a generalized Young measure is an object $(\nu,\lambda,\nu^\infty)$ in 
	\begin{align*}
		\bigg\{ (\nu,\lambda,\nu^\infty) &\in L^\infty_\weak((0,T)\times \T^d; \probmeas(\R^+\times\R^d)) \times \meas^+([0,T]\times \T^d) \times L^\infty_\weak([0,T]\times \T^d,\lambda; \probmeas(\sphere{+}_{\gamma,2})) \, \Big|\, \\
		&\int_0^T \int_{\T^d} \Big[ \langle \nu_{t,x} , |\alpha'|^2 \rangle + \langle \nu_{t,x} , \alpha_1^\gamma \rangle \Big] \dx\dt + \lambda([0,T] \times \T^d) < \infty \bigg\}, 
	\end{align*} 
	which we also denote by $\setGYM$. In particular, the bound \eqref{eq:GYM-bound} has been replaced by 
	\begin{equation} \label{eq:GYM-bound-compr}
		\int_0^T \int_{\T^d} \Big[ \langle \nu_{t,x} , |\alpha'|^2 \rangle + \langle \nu_{t,x} , \alpha_1^\gamma \rangle \Big] \dx\dt + \lambda([0,T] \times \T^d) < \infty . 
	\end{equation} 
	The reader should notice that all statements in Sect.~\ref{sec:general} still hold in this modified setting. 
	
	Now we are ready to write down the definition of a measure-valued solution to the compressible Euler equations.
	\begin{defn}[See {\cite[Sect.~4.1]{GwiWie15}}] \label{defn:compr-euler-mvs}
		Let $\rho_0\in L^\gamma(\T^d)$ and $u_0$ such that $\rho_0 |u_0|^2\in L^1(\T^d)$. A generalized Young measure $(\nu,\lambda,\nu^\infty)\in \setGYM$ is called \emph{measure-valued solution} of the compressible Euler equations \eqref{eq:compr-euler-mass}, \eqref{eq:compr-euler-mom} with initial data $\rho_0,u_0$ if
		\begin{align}
			&\int_0^T \int_{\T^d} \Big[ \langle \nu_{t,x} , \alpha_1\rangle \partial_t \psi + \langle \nu_{t,x} , \sqrt{\alpha_1}\alpha' \rangle \cdot \Grad \psi \Big] \dx\dt + \int_{\T^d} \rho_0 \psi(0,\cdot)\dx = 0, \label{eq:compr-euler-mvs-mass} \\ 
			&\int_0^T \int_{\T^d} \Big[ \langle \nu_{t,x}, \sqrt{\alpha_1}\alpha' \rangle \cdot \partial_t \phi + \langle \nu_{t,x} , \alpha'\otimes \alpha' \rangle : \Grad \phi + \langle \nu_{t,x}, \alpha_1^\gamma \rangle \Div \phi\Big] \dx\dt \notag\\
			&\quad + \int_0^T \int_{\T^d} \Big[ \langle \nu^\infty_{t,x} , \beta'\otimes \beta' \rangle : \Grad \phi + \langle \nu^\infty_{t,x} , \beta_1^\gamma \rangle \Div \phi \Big] \dlambda(t,x) + \int_{\T^d} \rho_0 u_0 \cdot \phi(0,\cdot)\dx = 0 \label{eq:compr-euler-mvs-mom}
		\end{align}
		for all test functions $\psi\in \Cc([0,T)\times \T^d)$, $\phi\in \Cc([0,T)\times \T^d;\R^d)$. 
		
		Such a measure-valued solution $(\nu,\lambda,\nu^\infty)\in \setGYM$ is called \emph{admissible} if the following assertions hold:
		\begin{itemize}
			\item The concentration measure $\lambda$ admits a disintegration of the form $\dlambda(t,x) = \dlambda_t(x) \otimes \dt$, where $t\mapsto \lambda_t$ is a bounded (with respect to the total variation norm) measurable map from $[0,T]$ into $\meas^+(\T^d)$. 
			
			\item The energy is bounded by the initial energy, i.e. 
			\begin{align} 
				&\int_{\T^d} \Big[ \half \langle \nu_{t,x} , |\alpha'|^2 \rangle + \tfrac{1}{\gamma-1} \langle \nu_{t,x} , \alpha_1^\gamma \rangle \Big] \dx + \int_{\T^d} \Big[ \half \langle \nu^\infty_{t,x} , |\beta'|^2 \rangle + \tfrac{1}{\gamma-1} \langle \nu_{t,x}^\infty , \beta_1^\gamma \rangle \Big] \dlambda_t(x) \notag \\ 
				&\leq \int_{\T^d} \Big[ \half \rho_0 |u_0|^2 + \tfrac{1}{\gamma-1} \rho_0^\gamma \Big] \dx\qquad \text{ for a.e. } t\in (0,T). \label{eq:compr-euler-mvs-energy}
			\end{align}
		\end{itemize}
	\end{defn}
	
	\begin{rem}
		Note that in the context of the compressible Euler equations \eqref{eq:compr-euler-mass}, \eqref{eq:compr-euler-mom}, the term 
		$$
		\int_{\T^d} \Big[ \half \langle \nu^\infty_{t,x} , |\beta'|^2 \rangle + \tfrac{1}{\gamma-1} \langle \nu_{t,x}^\infty , \beta_1^\gamma \rangle \Big] \dlambda_t(x)
		$$ 
		cannot be further simplified. This is in contrast to the incompressible setting, see Rem.~\ref{rem:incompr-energy}.
	\end{rem}	
	
	\begin{rem} \label{rem:bound-redundant-compr}
		Like in the incompressible case (see Rem.~\ref{rem:EnIneq->GYMbound}), the energy inequality \eqref{eq:compr-euler-mvs-energy} makes the bound \eqref{eq:GYM-bound-compr} redundant. To see the bound on $\lambda([0,T]\times \T^d)$, we compute 
		\begin{align*} 
			1 &= \int_{\sphere{+}_{\gamma,2}} \Big[|\beta_1|^{2\gamma} + |\beta'|^4\Big] \dd\nu^\infty_{t,x} \leq \int_{\sphere{+}_{\gamma,2}} \Big[|\beta_1|^{\gamma} + |\beta'|^2\Big] \dd\nu^\infty_{t,x} \\
			&\leq \langle \nu^\infty_{t,x} , |\beta'|^2 \rangle + \langle \nu^\infty_{t,x} , \beta_1^\gamma \rangle \\ 
			&\leq \max\{2,\gamma-1\} \Big[ \half\langle \nu^\infty_{t,x} , |\beta'|^2 \rangle + \tfrac{1}{\gamma-1} \langle \nu^\infty_{t,x} , \beta_1^\gamma \rangle \Big],
		\end{align*}
		which holds for $\lambda$-a.e.~$(t,x)\in [0,T]\times \T^d$, or equivalently for a.e.~$t\in [0,T]$ and $\lambda_t$-a.e.~$x\in \T^d$. Integration and the energy bound \eqref{eq:compr-euler-mvs-energy} yield
		\begin{align*} 
			\lambda([0,T]\times \T^d) &= \int_0^T \int_{\T^d} 1 \dd\lambda_t(x) \dt \\
			&\leq \max\{2,\gamma-1\} \int_0^T \int_{\T^d} \Big[ \half\langle \nu^\infty_{t,x} , |\beta'|^2 \rangle + \tfrac{1}{\gamma-1} \langle \nu^\infty_{t,x} , \beta_1^\gamma \rangle \Big] \dd\lambda_t(x) \dt \\
			&\leq \max\{2,\gamma-1\} T \int_{\T^d} \Big[ \half \rho_0 |u_0|^2 + \tfrac{1}{\gamma-1} \rho_0^\gamma \Big] \dx.
		\end{align*}
	\end{rem} 
	
	\begin{rem}
		One may treat $(t,x)\mapsto \langle \nu_{t,x} , \alpha_1\rangle$ and $(t,x)\mapsto\langle \nu_{t,x}, \sqrt{\alpha_1}\alpha' \rangle$ as being continuous in time with respect to the weak-$\ast$ topology in $L^\infty$. In fact, the above maps can be redefined on a set of times of measure zero so that they have this continuity property, see e.g.~\cite[Thm.~4.1.1]{Dafermos1stEd} or \cite[Lemma~1 and Appendix~A]{DelSze10}. As a consequence, \eqref{eq:compr-euler-mvs-mass}, \eqref{eq:compr-euler-mvs-mom} can be equivalently formulated in a slightly different way, namely with integrals in time only over an interval $[0,\tau]$ (rather than $[0,T]$) for any $0\leq \tau \leq T$ and with an additional end condition at $t=\tau$. This is the way how \eqref{eq:compr-euler-mvs-mass}, \eqref{eq:compr-euler-mvs-mom} are written in \cite{GwiWie15}, see equation (4.2) therein.
	\end{rem}
	
	In the context of the compressible Euler system \eqref{eq:compr-euler-mass}, \eqref{eq:compr-euler-mom}, we choose $f\in \mathcal{F}_{\gamma,2}(\R^+\times\R^{d})$, 
	\begin{equation} \label{eq:f-compr-euler}
		f(\alpha_1,\alpha')=\half|\alpha'|^2 + \tfrac{1}{\gamma-1} \alpha_1^\gamma, 
	\end{equation}
	whose recession function reads $f^\infty(\beta_1,\beta')=\half|\beta'|^2 + \tfrac{1}{\gamma-1} \beta_1^\gamma$. 
	Like in the incompressible case, $f$ represents the energy. Thus, the functional $\V_f$ defined in Defn.~\ref{defn:V} is given by
	\begin{align*}
		\V_f[\nu,\lambda,\nu^\infty] &= \int_0^T \int_{\T^d} \Big[ \half \langle \nu_{t,x}, |\alpha'|^2 \rangle + \tfrac{1}{\gamma-1} \langle \nu_{t,x}, \alpha_1^\gamma \rangle - \half \big|\langle \nu_{t,x}, \alpha' \rangle \big|^2 - \tfrac{1}{\gamma-1} \langle \nu_{t,x}, \alpha_1 \rangle^\gamma \Big] \dx\dt \\
		&\quad + \iint_{[0,T]\times\T^d} \Big[ \half\langle \nu^\infty_{t,x} , |\beta'|^2 \rangle + \tfrac{1}{\gamma-1} \langle \nu_{t,x}^\infty , \beta_1^\gamma \rangle \Big] \dlambda(t,x). 
	\end{align*}
	
	Again for given initial data  $\rho_0\in L^\gamma(\T^d)$ and $u_0$ such that $\rho_0 |u_0|^2\in L^1(\T^d)$, we set 
	\begin{align}
		M:= \Big\{ (\nu,\lambda,\nu^\infty)\in \setGYM \,\Big|\,(\nu,\lambda,\nu^\infty)&\text{ is an admissible measure-valued solution } \label{eq:M-compr-euler} \\
		&\text{of \eqref{eq:compr-euler-mass}, \eqref{eq:compr-euler-mom} with initial data }\rho_0,u_0 \Big\}. \notag
	\end{align}
	
	\begin{prop} \label{prop:M-compr-euler-suitable} 
		For the function $f$ chosen in \eqref{eq:f-compr-euler} and the set $M\subset \setGYM$ defined in \eqref{eq:M-compr-euler} the assumptions~\ref{item:A-f-strictlyconvex}-\ref{item:A-M-compact} hold.
	\end{prop}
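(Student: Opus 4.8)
The plan is to verify the four assumptions \ref{item:A-f-strictlyconvex}--\ref{item:A-M-compact} one after another, mirroring the proof of Prop.~\ref{prop:M-euler-suitable} but keeping track of the modifications of the Alibert--Bouchitt\'e framework explained above. For \ref{item:A-f-strictlyconvex} I would note that $(\alpha_1,\alpha')\mapsto\half|\alpha'|^2$ is strictly convex in $\alpha'$, while $\alpha_1\mapsto\tfrac{1}{\gamma-1}\alpha_1^\gamma$ is strictly convex on $\R^+$ for $\gamma>1$ (its second derivative $\gamma\alpha_1^{\gamma-2}$ being positive); since the two contributions depend on disjoint groups of variables, their sum $f$ is strictly convex on $\R^+\times\R^d$, and the membership $f\in\mathcal{F}_{\gamma,2}(\R^+\times\R^d)$ together with the formula for $f^\infty$ was already recorded below \eqref{eq:f-compr-euler}. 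For \ref{item:A-M-nonempty} I would simply invoke existence of admissible measure-valued solutions of \eqref{eq:compr-euler-mass}, \eqref{eq:compr-euler-mom} for arbitrary admissible initial data, which goes back to \name{Neustupa}~\cite{Neustupa93} and is carried out in the present framework by \name{Gwiazda}-\name{{\'S}wierczewska-Gwiazda}-\name{Wiedemann}~\cite{GwiWie15}; hence $M\neq\emptyset$.

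For \ref{item:A-M-convex}, let $(\nu^i,\lambda^i,(\nu^\infty)^i)\in M$, $i=1,2$, with convex combination $(\widehat{\nu},\widehat{\lambda},\widehat{\nu}^\infty)$ in the sense of (the compressible analogue of) Defn.~\ref{defn:convexity}. The key observation is that every nonlinearity entering the weak formulation \eqref{eq:compr-euler-mvs-mass}, \eqref{eq:compr-euler-mvs-mom}, namely (componentwise) $\alpha_1$, $\sqrt{\alpha_1}\,\alpha'$, $\alpha'\otimes\alpha'$ and $\alpha_1^\gamma$, belongs to $\mathcal{F}_{\gamma,2}(\R^+\times\R^d)$: writing $t:=(|\alpha_1|^{2\gamma}+|\alpha'|^4)^{1/2}$ one has $|\alpha_1|\le t^{1/\gamma}$ and $|\alpha'|\le t^{1/2}$, so (using $\gamma>1$) each of these functions is bounded by $C(1+t)$, and computing $\lim_{s\to\infty}f_0(s^2\beta_1,s^\gamma\beta')$ shows that their $2$-recession functions are $0$, $0$, $\beta'\otimes\beta'$ and $\beta_1^\gamma$ respectively --- precisely the functions paired with $\lambda$ in \eqref{eq:compr-euler-mvs-mass}, \eqref{eq:compr-euler-mvs-mom}. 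Applying the defining identity of the convex combination to each of these (and to the linear test functions) then gives, by linearity, that $(\widehat{\nu},\widehat{\lambda},\widehat{\nu}^\infty)$ again satisfies \eqref{eq:compr-euler-mvs-mass}, \eqref{eq:compr-euler-mvs-mom} with the same initial data. For admissibility, the compressible analogue of Lemma~\ref{lemma:convexity} yields $\widehat{\lambda}=\tau\lambda^1+(1-\tau)\lambda^2$, so $\widehat{\lambda}$ inherits the disintegration $\dlambdahat(t,x)=\dlambdahat_t(x)\otimes\dt$ with $\dlambdahat_t:=\tau\dd\lambda^1_t+(1-\tau)\dd\lambda^2_t$; by (essential) uniqueness of disintegrations, for a.e.~$t$ the time-slice energy \eqref{eq:compr-euler-mvs-energy} for $(\widehat{\nu},\widehat{\lambda},\widehat{\nu}^\infty)$ is the corresponding convex combination of those for $\nu^1$ and $\nu^2$, hence still bounded by the initial energy. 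Therefore $(\widehat{\nu},\widehat{\lambda},\widehat{\nu}^\infty)\in M$.

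For \ref{item:A-M-compact}, since $M$ is evidently closed in the weak-$\ast$ topology of $\setGYM$, it suffices to extract from an arbitrary sequence $(\nu^k,\lambda^k,(\nu^\infty)^k)_{k\in\N}\subset M$ a weakly-$\ast$ convergent subsequence with limit in $\setGYM$; this follows from the fundamental theorem for generalized Young measures in the anisotropic setting of \name{Gwiazda}-\name{{\'S}wierczewska-Gwiazda}-\name{Wiedemann}~\cite{GwiWie15} (the analogue of \cite[Cor.~12.3]{Rindler}) once the sequences
\[
\Big(\lambda^k([0,T]\times\T^d)\Big)_{k\in\N}\qquad\text{and}\qquad\left(\int_0^T\int_{\T^d}\Big[\langle\nu^k_{t,x},|\alpha'|^2\rangle+\langle\nu^k_{t,x},\alpha_1^\gamma\rangle\Big]\dx\dt\right)_{k\in\N}
\]
are shown to be uniformly bounded. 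Integrating the energy inequality \eqref{eq:compr-euler-mvs-energy} in time bounds the second sequence by $T\int_{\T^d}\big[\half\rho_0|u_0|^2+\tfrac{1}{\gamma-1}\rho_0^\gamma\big]\dx$, while the bound on $\lambda^k([0,T]\times\T^d)$ by $\max\{2,\gamma-1\}\,T\int_{\T^d}\big[\half\rho_0|u_0|^2+\tfrac{1}{\gamma-1}\rho_0^\gamma\big]\dx$ is exactly the computation of Rem.~\ref{rem:bound-redundant-compr}. This yields \ref{item:A-M-compact}.

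I expect the only genuinely non-routine step to be the bookkeeping in \ref{item:A-M-convex}: one must check carefully that all fluxes $\sqrt{\alpha_1}\,\alpha'$, $\alpha'\otimes\alpha'$, $\alpha_1^\gamma$ (and $\alpha_1$) really lie in $\mathcal{F}_{\gamma,2}$ and that their recession functions coincide with the concentration terms in \eqref{eq:compr-euler-mvs-mass}, \eqref{eq:compr-euler-mvs-mom}, so that linearity of the convex combination applies --- this is where the constraint $\gamma>1$ enters. A secondary point is to make sure the compactness result of \cite{GwiWie15} is invoked in exactly the form needed, since the recession sphere $\sphere{+}_{\gamma,2}$ and the growth $(|\alpha_1|^{2\gamma}+|\alpha'|^4)^{1/2}$ are anisotropic; everything else is a direct transcription of the incompressible argument.
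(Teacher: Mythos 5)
Your proposal is correct and follows essentially the same route as the paper's proof, which simply verifies \ref{item:A-f-strictlyconvex}--\ref{item:A-M-compact} in turn, cites \cite{Neustupa93,GwiWie15} for non-emptiness, and transfers the convexity and compactness arguments verbatim from the incompressible case (Prop.~\ref{prop:M-euler-suitable}) using the energy inequality and Rem.~\ref{rem:bound-redundant-compr} for the uniform bounds; your explicit check that $\alpha_1$, $\sqrt{\alpha_1}\,\alpha'$, $\alpha'\otimes\alpha'$, $\alpha_1^\gamma$ lie in $\mathcal{F}_{\gamma,2}$ with the stated recession functions is exactly the bookkeeping the paper leaves implicit. (Only a cosmetic slip: the second derivative of $\tfrac{1}{\gamma-1}\alpha_1^{\gamma}$ is $\gamma\alpha_1^{\gamma-2}$, i.e.\ of $\alpha_1^\gamma$ it is $\gamma(\gamma-1)\alpha_1^{\gamma-2}$, which does not affect the positivity argument.)
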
 
	
	\begin{proof} 
		\begin{enumerate} 
			\item[(A1)] It is obvious that $f$ given by \eqref{eq:f-compr-euler} is strictly convex. 
			
			\item[(A2)] Existence of measure-valued solutions for the compressible Euler system \eqref{eq:compr-euler-mass}, \eqref{eq:compr-euler-mom} goes back to \name{Neustupa}~\cite{Neustupa93}. We also refer to \name{Gwiazda}-\name{{\'S}wierczewska-Gwiazda}-\name{Wiedemann}~\cite[Rem.~4.1]{GwiWie15}. Thus, $M\neq \emptyset$. 
			
			\item[(A3)] The convexity of $M$ can be shown exactly as in the incompressible case, see the proof of Prop.~\ref{prop:M-euler-suitable}.
			
			\item[(A4)] To prove that $M$ is sequentially compact, one proceeds like in the incompressible case, see the proof of Prop.~\ref{prop:M-euler-suitable}. In particular, one has to prove that the sequences
			$$
			\Big(\lambda^k([0,T]\times \T^d)\Big)_{k\in \N}\subset \R\qquad \text{ and }\qquad\left(\int_0^T \int_{\T^d} \Big[ \langle \nu_{t,x}^k , |\alpha'|^2 \rangle + \langle \nu_{t,x}^k , \alpha_1^\gamma \rangle \Big] \dx\dt\right)_{k\in \N}\subset \R
			$$
			are uniformly bounded. Similarly to the bound \eqref{eq:GYM-bound-compr}, this immediately follows from \eqref{eq:compr-euler-mvs-energy}, see Rem.~\ref{rem:bound-redundant-compr}. 
		\end{enumerate} 
	\end{proof}
	
	As a consequence of Thms.~\ref{thm:crit1-existence}, \ref{thm:crit1-uniqueness-meanvalue} as well as Prop.~\ref{prop:M-compr-euler-suitable} we obtain the following.
	
	\begin{cor}
		There exists an admissible measure-valued solution of the compressible Euler equations \eqref{eq:compr-euler-mass}, \eqref{eq:compr-euler-mom} $(\nu,\lambda,\nu^\infty)\in M$ with initial data  $\rho_0\in L^\gamma(\T^d)$ and $u_0$ such that $\rho_0 |u_0|^2\in L^1(\T^d)$ which is maximal in the sense of Defn.~\ref{defn:crit1} with $f(\alpha_1,\alpha')=\half|\alpha'|^2 + \tfrac{1}{\gamma-1} \alpha_1^\gamma$.
		
		Moreover, any two such maxima $(\nu^1,\lambda^1,(\nu^\infty)^1),(\nu^2,\lambda^2,(\nu^\infty)^2) \in M$ satisfy 
		\begin{align*}
			&\langle \nu^1_{t,x}, \alpha_1 \rangle = \langle \nu^2_{t,x}, \alpha_1 \rangle \quad \text{ and }\quad \langle \nu^1_{t,x}, \alpha' \rangle = \langle \nu^2_{t,x}, \alpha' \rangle \qquad \text{ for a.e. } (t,x)\in (0,T)\times\T^d, \text{ and } \\ 
			&\int_0^T \int_{\T^d} \Big[ \half \langle \nu_{t,x}^1, |\alpha'|^2 \rangle + \tfrac{1}{\gamma-1} \langle \nu_{t,x}^1, \alpha_1^\gamma \rangle \Big] \dx\dt \\
			&\quad + \iint_{[0,T]\times\T^d} \Big[ \half\langle (\nu^\infty)_{t,x}^1 , |\beta'|^2 \rangle + \tfrac{1}{\gamma-1} \langle (\nu^\infty)_{t,x}^1 , \beta_1^\gamma \rangle \Big] \dlambda^1(t,x) \\
			&= \int_0^T \int_{\T^d} \Big[ \half \langle \nu_{t,x}^2, |\alpha'|^2 \rangle + \tfrac{1}{\gamma-1} \langle \nu_{t,x}^2, \alpha_1^\gamma \rangle \Big] \dx\dt \\
			&\quad + \iint_{[0,T]\times\T^d} \Big[ \half\langle (\nu^\infty)_{t,x}^2 , |\beta'|^2 \rangle + \tfrac{1}{\gamma-1} \langle (\nu^\infty)_{t,x}^2 , \beta_1^\gamma \rangle \Big] \dlambda^2(t,x). 
		\end{align*} 
	\end{cor}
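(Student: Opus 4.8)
The plan is to read off the Corollary directly from the abstract results Thm.~\ref{thm:crit1-existence} and Thm.~\ref{thm:crit1-uniqueness-meanvalue}, once the hypotheses \ref{item:A-f-strictlyconvex}--\ref{item:A-M-compact} have been verified. That verification is precisely the content of Prop.~\ref{prop:M-compr-euler-suitable}, so the proof will essentially consist of invoking that proposition and then specializing the conclusions of the two theorems to the present choice of $f$ and $M$.

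First I would record that the whole machinery of Sect.~\ref{sec:general} carries over to the modified compressible setting introduced above. The proofs of Prop.~\ref{prop:bounds-V}, Lemma~\ref{lemma:properties-V}, Thm.~\ref{thm:crit1-existence} and Thm.~\ref{thm:crit1-uniqueness-meanvalue} use only the bound \eqref{eq:GYM-bound-compr} (in place of \eqref{eq:GYM-bound}), the convexity and non-negativity of $f\in\mathcal{F}_{\gamma,2}(\R^+\times\R^d)$, and weak-$\ast$ convergence of generalized Young measures; none of these is sensitive to replacing $\R^m$ by $\R^+\times\R^d$ or $\sphere{m-1}$ by $\sphere{+}_{\gamma,2}$. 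This transfer is already asserted in the text preceding Defn.~\ref{defn:compr-euler-mvs}, and I would simply make it explicit.

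Second, with $f$ as in \eqref{eq:f-compr-euler} and $M$ as in \eqref{eq:M-compr-euler}, Prop.~\ref{prop:M-compr-euler-suitable} grants all of \ref{item:A-f-strictlyconvex}--\ref{item:A-M-compact}. In particular \ref{item:A-M-nonempty} and \ref{item:A-M-compact} hold, so Thm.~\ref{thm:crit1-existence} produces a maximal $(\nu,\lambda,\nu^\infty)\in M$ in the sense of Defn.~\ref{defn:crit1}; since every element of $M$ is by definition an admissible measure-valued solution of \eqref{eq:compr-euler-mass}, \eqref{eq:compr-euler-mom} with initial data $\rho_0,u_0$, this yields the existence claim.

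Third, for the uniqueness part I would take two maximizers $(\nu^1,\lambda^1,(\nu^\infty)^1),(\nu^2,\lambda^2,(\nu^\infty)^2)\in M$. Assumptions \ref{item:A-f-strictlyconvex} and \ref{item:A-M-convex} hold (again by Prop.~\ref{prop:M-compr-euler-suitable}), so Thm.~\ref{thm:crit1-uniqueness-meanvalue} gives $\langle\nu^1_{t,x},\cdot\rangle=\langle\nu^2_{t,x},\cdot\rangle$ for a.e.~$(t,x)$ together with equality of the quantities \eqref{eq:210}. Since the dummy variable here is $(\alpha_1,\alpha')\in\R^+\times\R^d$, the first identity unpacks into $\langle\nu^1_{t,x},\alpha_1\rangle=\langle\nu^2_{t,x},\alpha_1\rangle$ and $\langle\nu^1_{t,x},\alpha'\rangle=\langle\nu^2_{t,x},\alpha'\rangle$; and since $f^\infty(\beta_1,\beta')=\half|\beta'|^2+\tfrac{1}{\gamma-1}\beta_1^\gamma$, the equality of \eqref{eq:210} is exactly the displayed equality of total energies. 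I do not expect a genuine obstacle: the only point demanding care is the routine bookkeeping of the first step, i.e.~checking that the abstract framework is truly insensitive to the compressible modification, after which everything follows by substitution, just as the incompressible Corollary followed from Prop.~\ref{prop:M-euler-suitable}.
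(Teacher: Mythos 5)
Your proposal is correct and matches the paper exactly: the corollary is obtained by combining Thm.~\ref{thm:crit1-existence} and Thm.~\ref{thm:crit1-uniqueness-meanvalue} with Prop.~\ref{prop:M-compr-euler-suitable}, after noting (as the paper does just before Defn.~\ref{defn:compr-euler-mvs}) that the abstract results of Sect.~\ref{sec:general} carry over verbatim to the modified compressible setting. Your unpacking of the barycenter identity into the two components $\alpha_1$, $\alpha'$ and of \eqref{eq:210} into the displayed energy equality is exactly the intended specialization.
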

	
	\begin{rem} \label{rem:DMV-solution} 
		In the context of the isentropic Euler equations \eqref{eq:compr-euler-mass}, \eqref{eq:compr-euler-mom}, there is another definition of measure-valued solutions avaliable in the literature: the so-called \emph{dissipative measure-valued (DMV) solutions}, see \cite{BreFeiHof20_1}. These DMV solutions do not exactly fit into the \name{Alibert}-\name{Bouchitt{\'e}}~\cite{AliBou97} framework outlined in Sect.~\ref{sec:general}. One of the main differences between admissible measure-valued solutions in the sense of Defn.~\ref{defn:compr-euler-mvs} and DMV solutions lies in the defect terms. It is however not difficult to modify the functional $\V_f$ in order to apply our theory to DMV solutions. In particular, one has to replace the term 
		$$
		\iint_{[0,T]\times\T^d} \Big[ \half\langle \nu^\infty_{t,x} , |\beta'|^2 \rangle + \tfrac{1}{\gamma-1} \langle \nu_{t,x}^\infty , \beta_1^\gamma \rangle \Big] \dlambda(t,x)
		$$ 
		by its analogue in the DMV setting.
	\end{rem}

	\section*{Acknowledgements}
	
	C.K.~acknowledges funding by the Deutsche Forschungsgemeinschaft (DFG, German Research Foundation) within SPP 2410 \emph{Hyperbolic Balance Laws in Fluid
		Mechanics: Complexity, Scales, Randomness (CoScaRa)}, project number 525941602.
	
	S.M.~acknowledges financial support from the Alexander von Humboldt foundation and also from the Deutsche Forschungsgemeinschaft (DFG, German Research Foundation) within SPP 2410, project number 525935467.
	
	E.W.~acknowledges funding by the Deutsche Forschungsgemeinschaft (DFG, German Research Foundation) within SPP 2410, project number 525716336.  


\end{document}